\DeclareMathOperator{\dive}{div} 
\numberwithin{equation}{section}
\newcolumntype{C}{>{$\displaystyle} c <{$}}
\def\env@dmatrix{\hskip -\arraycolsep
	\let\@ifnextchar\new@ifnextchar
	\def\arraystretch{2}%
	\array{*{\c@MaxMatrixCols}{>{\displaystyle}c}}%
}
\begin{document}

	\renewcommand{\thefootnote}{\fnsymbol{footnote}}
	
	\title{Higher Regularity of Weak Limits of Willmore Immersions II}
	\author{Alexis Michelat\footnote{Department of Mathematics, ETH Zentrum, CH-8093 Zürich, Switzerland.}\; and Tristan \selectlanguage{french}Rivière$^{*}$\selectlanguage{english}\setcounter{footnote}{0}}
	\date{\today}
	
	\maketitle
	
	\vspace{-0.5em}
	
	\begin{abstract}
		We obtain in arbitrary codimension a removability result on the order of singularity of Willmore surfaces realising the \emph{width} of Willmore min-max problems on spheres. As a consequence, out of the twelve families of non-planar minimal surfaces in $\mathbb{R}^3$ of total curvature greater than $-12\pi$, only three of them may occur as conformal images of bubbles in Willmore min-max problems. 
	\end{abstract}

	\tableofcontents
	\vspace{0cm}
	\begin{center}
		{Mathematical subject classification : \\
			35J35, 35R01, 49Q10, 53A05, 53A10, 53A30, 53C42, 58E15.}
	\end{center}
	
	\theoremstyle{plain}
	\newtheorem*{theorem*}{Theorem}
	\newtheorem{theorem}{Theorem}[section]
	\newenvironment{theorembis}[1]
	{\renewcommand{\thetheorem}{\ref{#1}$'$}%
		\addtocounter{theorem}{-1}%
		\begin{theorem}}
		{\end{theorem}}
	\renewcommand*{\thetheorem}{\Alph{theorem}}
	\newtheorem{lemme}[theorem]{Lemma}
	\newtheorem*{lemme*}{Lemma}
	\newtheorem{propdef}[theorem]{Definition-Proposition}
	\newtheorem*{propdef*}{Definition-Proposition}
	\newtheorem{prop}[theorem]{Proposition}
	\newtheorem{cor}[theorem]{Corollary}
	\theoremstyle{definition}
	\newtheorem*{definition}{Definition}
	\newtheorem{defi}[theorem]{Definition}
	\newtheorem{rem}[theorem]{Remark}
	\newtheorem*{rem*}{Remark}
	\newtheorem{rems}[theorem]{Remarks}
	\newtheorem{exemple}[theorem]{Example}
	\newtheorem{remsintro}{Remarks}
	\newtheorem{remintro}[remsintro]{Remark}
	\renewcommand\hat[1]{%
		\savestack{\tmpbox}{\stretchto{%
				\scaleto{%
					\scalerel*[\widthof{\ensuremath{#1}}]{\kern-.6pt\bigwedge\kern-.6pt}%
					{\rule[-\textheight/2]{1ex}{\textheight}}
				}{\textheight}%
			}{0.5ex}}%
		\stackon[1pt]{#1}{\tmpbox}
	}
	\parskip 1ex
	\newcommand{\totimes}{\ensuremath{\,\dot{\otimes}\,}}
	\newcommand{\vc}[3]{\overset{#2}{\underset{#3}{#1}}}
	\newcommand{\conv}[1]{\ensuremath{\underset{#1}{\longrightarrow}}}
	\newcommand{\A}{\ensuremath{\vec{A}}}
	\newcommand{\B}{\ensuremath{\vec{B}}}
	\newcommand{\C}{\ensuremath{\mathbb{C}}}
	\newcommand{\D}{\ensuremath{\nabla}}
	\newcommand{\E}{\ensuremath{\vec{E}}}
	\newcommand{\I}{\ensuremath{\mathbb{I}}}
	\newcommand{\Q}{\ensuremath{\vec{Q}}}
	\newcommand{\loc}{\ensuremath{\mathrm{loc}}}
	\newcommand{\z}{\ensuremath{\bar{z}}}
	\newcommand{\hh}{\ensuremath{\mathscr{H}}}
	\newcommand{\h}{\ensuremath{\vec{h}}}
	\newcommand{\vol}{\ensuremath{\mathrm{vol}}}
	\newcommand{\hs}[3]{\ensuremath{\left\Vert #1\right\Vert_{\mathrm{H}^{#2}(#3)}}}
	\newcommand{\R}{\ensuremath{\mathbb{R}}}
	\renewcommand{\P}{\ensuremath{\mathbb{P}}}
	\newcommand{\N}{\ensuremath{\mathbb{N}}}
	\newcommand{\Z}{\ensuremath{\mathbb{Z}}}
	\newcommand{\p}[1]{\ensuremath{\partial_{#1}}}
	\newcommand{\Res}{\ensuremath{\mathrm{Res}}}
	\newcommand{\lp}[2]{\ensuremath{\mathrm{L}^{#1}(#2)}}
	\renewcommand{\wp}[3]{\ensuremath{\left\Vert #1\right\Vert_{\mathrm{W}^{#2}(#3)}}}
	\newcommand{\wpn}[3]{\ensuremath{\Vert #1\Vert_{\mathrm{W}^{#2}(#3)}}}
	\newcommand{\np}[3]{\ensuremath{\left\Vert #1\right\Vert_{\mathrm{L}^{#2}(#3)}}}
	\newcommand{\hardy}[2]{\ensuremath{\left\Vert #1\right\Vert_{\mathscr{H}^{1}(#2)}}}
	\newcommand{\lnp}[3]{\ensuremath{\left| #1\right|_{\mathrm{L}^{#2}(#3)}}}
	\newcommand{\npn}[3]{\ensuremath{\Vert #1\Vert_{\mathrm{L}^{#2}(#3)}}}
	\newcommand{\nc}[3]{\ensuremath{\left\Vert #1\right\Vert_{C^{#2}(#3)}}}
	\renewcommand{\Re}{\ensuremath{\mathrm{Re}\,}}
	\renewcommand{\Im}{\ensuremath{\mathrm{Im}\,}}
	\newcommand{\dist}{\ensuremath{\mathrm{dist}}}
	\newcommand{\diam}{\ensuremath{\mathrm{diam}\,}}
	\newcommand{\leb}{\ensuremath{\mathscr{L}}}
	\newcommand{\supp}{\ensuremath{\mathrm{supp}\,}}
	\renewcommand{\phi}{\ensuremath{\vec{\Phi}}}
	\renewcommand{\H}{\ensuremath{\vec{H}}}
	\renewcommand{\L}{\ensuremath{\vec{L}}}
	\renewcommand{\lg}{\ensuremath{\mathscr{L}_g}}
	\renewcommand{\ker}{\ensuremath{\mathrm{Ker}}}
	\renewcommand{\epsilon}{\ensuremath{\varepsilon}}
	\renewcommand{\bar}{\ensuremath{\overline}}
	\newcommand{\s}[2]{\ensuremath{\langle #1,#2\rangle}}
	\newcommand{\pwedge}[2]{\ensuremath{\,#1\wedge#2\,}}
	\newcommand{\bs}[2]{\ensuremath{\left\langle #1,#2\right\rangle}}
	\newcommand{\scal}[2]{\ensuremath{\langle #1,#2\rangle}}
	\newcommand{\sg}[2]{\ensuremath{\left\langle #1,#2\right\rangle_{\mkern-3mu g}}}
	\newcommand{\n}{\ensuremath{\vec{n}}}
	\newcommand{\ens}[1]{\ensuremath{\left\{ #1\right\}}}
	\newcommand{\lie}[2]{\ensuremath{\left[#1,#2\right]}}
	\newcommand{\g}{\ensuremath{g}}
	\newcommand{\e}{\ensuremath{\vec{e}}}
	\newcommand{\f}{\ensuremath{\vec{f}}}
	\newcommand{\ig}{\ensuremath{|\vec{\mathbb{I}}_{\phi}|}}
	\newcommand{\ik}{\ensuremath{\left|\mathbb{I}_{\phi_k}\right|}}
	\newcommand{\w}{\ensuremath{\vec{w}}}
	\renewcommand{\v}{\ensuremath{\vec{v}}}
	\renewcommand{\tilde}{\ensuremath{\widetilde}}
	\newcommand{\vg}{\ensuremath{\mathrm{vol}_g}}
	\newcommand{\im}{\ensuremath{\mathrm{W}^{2,2}_{\iota}(\Sigma,N^n)}}
	\newcommand{\imm}{\ensuremath{\mathrm{W}^{2,2}_{\iota}(\Sigma,\R^3)}}
	\newcommand{\timm}[1]{\ensuremath{\mathrm{W}^{2,2}_{#1}(\Sigma,T\R^3)}}
	\newcommand{\tim}[1]{\ensuremath{\mathrm{W}^{2,2}_{#1}(\Sigma,TN^n)}}
	\renewcommand{\d}[1]{\ensuremath{\partial_{x_{#1}}}}
	\newcommand{\dg}{\ensuremath{\mathrm{div}_{g}}}
	\renewcommand{\Res}{\ensuremath{\mathrm{Res}}}
	\newcommand{\un}[2]{\ensuremath{\bigcup\limits_{#1}^{#2}}}
	\newcommand{\res}{\mathbin{\vrule height 1.6ex depth 0pt width
			0.13ex\vrule height 0.13ex depth 0pt width 1.3ex}}
	\newcommand{\ala}[5]{\ensuremath{e^{-6\lambda}\left(e^{2\lambda_{#1}}\alpha_{#2}^{#3}-\mu\alpha_{#2}^{#1}\right)\left\langle \nabla_{\vec{e}_{#4}}\vec{w},\vec{\mathbb{I}}_{#5}\right\rangle}}
	\setlength\boxtopsep{1pt}
	\setlength\boxbottomsep{1pt}
	\newcommand\norm[1]{%
		\setbox1\hbox{$#1$}%
		\setbox2\hbox{\addvbuffer{\usebox1}}%
		\stretchrel{\lvert}{\usebox2}\stretchrel*{\lvert}{\usebox2}%
	}
	\allowdisplaybreaks
	\newcommand*\mcup{\mathbin{\mathpalette\mcapinn\relax}}
	\newcommand*\mcapinn[2]{\vcenter{\hbox{$\mathsurround=0pt
				\ifx\displaystyle#1\textstyle\else#1\fi\bigcup$}}}
	\def\Xint#1{\mathchoice
		{\XXint\displaystyle\textstyle{#1}}%
		{\XXint\textstyle\scriptstyle{#1}}%
		{\XXint\scriptstyle\scriptscriptstyle{#1}}%
		{\XXint\scriptscriptstyle\scriptscriptstyle{#1}}%
		\!\int}
	\def\XXint#1#2#3{{\setbox0=\hbox{$#1{#2#3}{\int}$ }
			\vcenter{\hbox{$#2#3$ }}\kern-.58\wd0}}
	\def\ddashint{\Xint=}
	\newcommand{\dashint}[1]{\ensuremath{{\Xint-}_{\mkern-10mu #1}}}
	\newcommand\ccancel[1]{\renewcommand\CancelColor{\color{red}}\cancel{#1}}
	\newcommand\colorcancel[2]{\renewcommand\CancelColor{\color{#2}}\cancel{#1}}
	
	\section{Introduction}
	
	The main result of the paper is a removability result on the second residue (defined in \cite{beriviere}, see also the introduction of \cite{blow-up}) of branched Willmore spheres solving min-max problems (see \cite{eversion}).
	
    \begin{theorem}\label{B}
    	Let $n\geq 3$ and $\mathscr{A}$ be an admissible
    	family of $W^{2,4}$ immersions of the sphere $S^2$ into $\R^n$. Assume that
    	\begin{align*}
    	\beta_0=\inf_{A\in \mathscr{A}}\sup W(A)>0.
    	\end{align*}
    	Then there exists finitely many true branched compact Willmore spheres $\phi_1
    	,\cdots,\phi_p:S^2\rightarrow\R^n$, and true branched compact Willmore spheres $\vec{\Psi}_1,\cdots,\vec{\Psi}_q:S^2\rightarrow\R^n$ such that
    	\begin{align}\label{quantization2}
    	\beta_0=&\sum_{i=1}^{p}W(\phi_i)+\sum_{j=1}^{q}\left(W(\vec{\Psi}_j)-4\pi\theta_j\right)\in 4\pi\N,
    	\end{align}
    	where $\theta_0(\vec{\Psi}_j,p_j)\geq 1$ is the multiplicity of $\vec{\Psi}_j$ at some point $p_j\in \vec{\Psi}_j(S^2)\subset \R^n$. Then at every branch point $p$ of $\phi_i, \vec{\Psi}_j$ of multiplicity $\theta_0=\theta_0(p)\geq 2$, the second residue $r(p)$ satisfies the inequality $r(p)\leq \theta_0-2$.
    \end{theorem}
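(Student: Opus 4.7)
The overall strategy is to combine the local asymptotic expansion of branched Willmore immersions at a branch point with the variational information carried by the min-max structure. The second residue $r(p)$ appears as a logarithmic correction in the expansion of $\phi$ (and of its mean curvature $\H$) at a branch point, and the inequality $r(p)\le\theta_0-2$ should reflect that a larger value of $r(p)$ corresponds to a singularity that can be ``smoothed out'' by a variation strictly decreasing the Willmore energy. That contradicts the fact that each $\phi_i,\vec{\Psi}_j$ realises, in the sense of the decomposition \eqref{quantization2}, the min-max value $\beta_0$ and therefore carries an entropy-type condition preventing cost-free local improvements.

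First I would localise near a branch point $p$ of multiplicity $\theta_0\ge 2$ in a conformal chart $z$ with $z(p)=0$, and recall from the Bernard--Rivière analysis the expansion of $\phi$ and $\H$ around $p$. Writing $\phi(z)=\Re\bigl(\vec{A}_0\,z^{\theta_0}\bigr)+\text{higher order}$ and correspondingly $\H(z)=\Re\bigl(\vec{H}_{-\theta_0+1}z^{-\theta_0+1}\bigr)+\cdots+\vec{\gamma}(p)\log|z|+O(1)$, the coefficient $\vec{\gamma}(p)$ encodes the \emph{first residue}, while the \emph{second residue} $r(p)\in\Z$ arises as the integer order of vanishing/blow-up of the holomorphic quartic form (or equivalently of a specific component in the conservation laws of \cite{beriviere}) detecting whether the next logarithmic obstruction in the Willmore system is activated. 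I would make explicit the relation between $r(p)$ and the exponents appearing in the expansion of the conservative quantities $\vec{L},\Q$ near $p$, so that the condition $r(p)>\theta_0-2$ translates into the presence of a specific removable mode of order strictly above $\theta_0-2$.

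Next, I would invoke the min-max characterisation: the limiting immersions in \eqref{quantization2} come from a Palais--Smale-type sequence for $W$ along the min-max family $\mathscr{A}$, which in the framework of \cite{eversion} produces a ``viscosity'' or entropy condition for the limit. Concretely, this gives a direction $\vec{w}$ along which the \emph{first} variation of $W$ at $\phi_i$ (resp.\ $\vec{\Psi}_j$) is non-negative. I would then construct an explicit admissible deformation $\phi_t = \phi + t\,\vec{w}_p + o(t)$ localised in a small disc around $p$, built from the mode of degree $r(p)$ in the normal bundle expansion, which one can realise by a compactly supported bi-Lipschitz deformation of $\mathscr{A}$ thanks to the $W^{2,4}$ ambient regularity of the admissible family. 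When $r(p)>\theta_0-2$, a direct computation of the second variation (using the explicit expansion above and integrating by parts in a small annulus $B_\rho\setminus B_\epsilon$, letting $\epsilon\to 0$ to capture the logarithmic/distributional contribution at $p$) shows that $\frac{d^2}{dt^2}W(\phi_t)\big|_{t=0}<0$ while the first-order term is controlled, contradicting the min-max entropy and concluding the proof.

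The main obstacle will be the construction and admissibility of the variation $\vec{w}_p$. One must (i) match the delicate algebraic expansion so that $\vec{w}_p$ picks out precisely the mode whose coefficient is $r(p)-(\theta_0-2)$, (ii) extend this local deformation to a deformation of the whole family $\mathscr{A}$ respecting the $W^{2,4}$ topology and the sphere topology used in the min-max, and (iii) show that the boundary terms picked up when integrating by parts on $B_\rho\setminus B_\epsilon$ yield the sign-defined quadratic form claimed, uniformly in $\epsilon$. The codimension $n\ge 3$ complicates matters, since $\vec{w}_p$ must be chosen in the full normal bundle and one has to deal with the non-trivial second fundamental form and its holomorphic Weingarten-type invariants in the expansion of $\Q$; this step is where most of the technical work of the paper is presumably concentrated, and where the regularity gains established in Part~I of the series are essential.
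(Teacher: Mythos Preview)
Your proposal takes a fundamentally different route from the paper, and unfortunately it has a genuine gap.

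The core issue is that you try to argue directly on the limiting branched Willmore immersion via a second-variation or ``energy-decreasing deformation'' mechanism. But branched Willmore spheres with $r(p)=\theta_0-1$ \emph{do} exist as legitimate critical points (the inversion of the Enneper surface is precisely such an example, cf.\ Figure~\ref{figure}). Hence no argument based solely on the limit map can rule them out: the removability $r(p)\le\theta_0-2$ is not an intrinsic property of branched Willmore surfaces, it is a property of those arising as limits of the specific viscosity approximation. Any proof must therefore track the \emph{approximating sequence} $\phi_k$ through the neck regions, not just the limit. Your deformation $\vec{w}_p$, being built from the expansion of the limit $\phi_\infty$ alone, cannot see this.

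Relatedly, you misread the role of the entropy condition. It is not a Morse-index or sign constraint on variations of the limit; it is the quantitative smallness statement $W_{\sigma_k}(\phi_k)-W(\phi_k)=o\bigl(l(\sigma_k)\tilde{l}(\sigma_k)\bigr)$ on the viscosity corrections along the sequence. In the paper this is used analytically (see \eqref{ineq0} and \eqref{ineq1}) to control $\Vert\omega_k\Vert_\infty$ and $\Vert\nabla\omega_k\Vert_2$, which in turn feed into an \emph{improved $L^{2,1}$ no-neck energy} (Theorem~\ref{improvedquanta2}). The actual proof then writes $\vec{\Psi}_k=\varphi_k\{(1+2\sigma_k^2(1+|\H_k|^2))\H_k+i\vec{L}_k\}$ with $\varphi_k(z)\sim z^{\theta_0-1}$, decomposes it as holomorphic plus error via a $\bar{\partial}$-system, applies Schwarz-type lemmas (Lemmas~\ref{schwarz}--\ref{schwarz2}) on the neck, and uses the $L^{2,1}$ smallness of $\nabla\n_k$ to force the leading coefficient $\vec{C}_0$ in $\H_{\phi_\infty}=\Re(\vec{C}_0 z^{1-\theta_0})+\cdots$ to vanish. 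That vanishing is exactly $r(p)\le\theta_0-2$. None of this is a variational or index computation; it is a quantitative PDE estimate propagated through the neck.
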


    As a corollary, we deduce as in \cite{blow-up} that most of possible minimal spheres of small absolute total curvature cannot occur. For the convenience of the reader, we recall the table below.
    
    {\tabulinesep=0.6mm
    	\begin{figure}[H]
    		\begin{center}
    			\begin{tabu}{|c|c|c|c|c|c|c|}
    				\hline
    				\begin{tabular}{@{}c@{}}Minimal\\ surface \end{tabular} & \begin{tabular}{@{}c@{}}Total \\ curvature\end{tabular}& \begin{tabular}{@{}c@{}}Non-zero \\ flux?\end{tabular} & \begin{tabular}{@{}c@{}}Number \\ of ends\end{tabular} & \begin{tabular}{@{}c@{}}Multiplicities\\ of ends\end{tabular} & \begin{tabular}{@{}c@{}}Second\\ residues \end{tabular} & \begin{tabular}{@{}c@{}}Possible min-max \\ Willmore bubble?\end{tabular}\\
    				\hline
    				\begin{tabular}{@{}c@{}}Catenoid\\  \end{tabular} & $-4\pi$ & Yes & $d=2$ & \begin{tabular}{@{}c@{}}  $m_1=1$\\ $m_2=1$  \end{tabular}  & \begin{tabular}{@{}c@{}} $r_1=0$\\ $r_2=0$ \end{tabular} & No \\
    				\hline
    				\begin{tabular}{@{}c@{}}Enneper \\ surface\end{tabular} & $-4\pi$ & No & $d=1$ & $m_1=3$ & $r_1=2$ & No\\
    				\hline
    				\begin{tabular}{@{}c@{}}Trinoid\\  \end{tabular} & $-8\pi$ & Yes & $d=3$ & \begin{tabular}{@{}c@{}} $m_j=1$\\ $1\leq j\leq 3$  \end{tabular} & \begin{tabular}{@{}c@{}} $r_j=0$\\ $1\leq j\leq 3$  \end{tabular} & No\\
    				\hline
    				\begin{tabular}{@{}c@{}}L\'{o}pez\\ surface I\end{tabular} & $-8\pi$ & No & $d=1$ & $m_1=5$ & $r_1=4$ & No\\
    				\hline
    				\begin{tabular}{@{}c@{}}L\'{o}pez\\ surface II\end{tabular} & $-8\pi$ & No & $d=1$ & $m_1=5$ & $r_1=3$ & Yes\\
    				\hline
    				\begin{tabular}{@{}c@{}}L\'{o}pez\\ surface III\end{tabular} & $-8\pi$ & Yes & $d=2$ & \begin{tabular}{@{}c@{}} $m_1=2$\\ $m_2=2$  \end{tabular} & \begin{tabular}{@{}c@{}} $r_1=1$\\ $r_2=1$  \end{tabular} & No\\
    				\hline
    				\begin{tabular}{@{}c@{}}L\'{o}pez\\ surface IV\end{tabular} & $-8\pi$ & Yes & $d=2$ & \begin{tabular}{@{}c@{}} $m_1=2$\\ $m_2=2$  \end{tabular} & \begin{tabular}{@{}c@{}} $r_1=1$\\ $r_2=1$  \end{tabular} & No\\
    				\hline
    				\begin{tabular}{@{}c@{}}L\'{o}pez\\ surface V\end{tabular} & $-8\pi$ & Yes & $d=2$ & \begin{tabular}{@{}c@{}} $m_1=2$\\ $m_2=2$  \end{tabular} & \begin{tabular}{@{}c@{}} $r_1=0$\\ $r_2=0$  \end{tabular} & No\\
    				\hline
    				\begin{tabular}{@{}c@{}}L\'{o}pez\\ surface VI\end{tabular} & $-8\pi$ & Yes & $d=2$ & \begin{tabular}{@{}c@{}} $m_1=1$\\ $m_2=3$  \end{tabular} & \begin{tabular}{@{}c@{}} $r_1=0$\\ $1\leq r_2\leq 2$  \end{tabular} & No \\
    				\hline
    				\begin{tabular}{@{}c@{}}L\'{o}pez\\ surface VII\end{tabular} & $-8\pi$ & Yes & $d=2$ & \begin{tabular}{@{}c@{}} $m_1=1$\\ $m_2=3$  \end{tabular} & \begin{tabular}{@{}c@{}} $r_1=0$\\ $r_2=2$  \end{tabular} & No\\
    				\hline
    				\begin{tabular}{@{}c@{}}L\'{o}pez\\ surface VIII\end{tabular} & $-8\pi$  & No & $d=2$ & \begin{tabular}{@{}c@{}} $m_1=1$\\ $m_2=3$  \end{tabular} & \begin{tabular}{@{}c@{}} $r_1=0$\\ $r_2=1$  \end{tabular} & Yes \\
    				\hline
    				\begin{tabular}{@{}c@{}}L\'{o}pez\\ surface IX\end{tabular} & $-8\pi$ & No & $d=2$ & \begin{tabular}{@{}c@{}} $m_1=1$\\ $m_2=3$  \end{tabular} & \begin{tabular}{@{}c@{}} $r_1=0$\\ $r_2=1$  \end{tabular} & Yes \\
    				\hline
    			\end{tabu}
    		\end{center}
    		\caption{Geometric properties of complete minimal surface with total curvature greater than $-12\pi$}\label{figure}
    	\end{figure}

    The main application of this Theorem is to restrict the possibilities of blow-up for Willmore surfaces realising the cost of the sphere eversion (\cite{kusner}).  

    \begin{theorem}\label{C} 
    Let $\iota_+:S^2\rightarrow\R^3$ be the standard embedding of the round sphere into Euclidean $3$-space, let $\iota_-:S^2\rightarrow \R^3$ be the antipodal embedding and let  $\mathrm{Imm}(S^2,\R^3)$ be the space of smooth \emph{immersions} from $S^2$ to $\R^3$. Furthermore, we denote by $\Omega$ the set of paths between the two immersions, defined by
    \begin{align*}
    \Omega=C^{0}([0,1],\mathrm{Imm}(S^2,\R^3))\cap\ens{\phi=\{\phi_t\}_{t\in [0,1]},\;\, \phi_0=\iota_+,\;\, \phi_1=\iota_-},
    \end{align*}
    and we define the \emph{cost of the min-max sphere eversion} by
    \begin{align*}
    \beta_0=\min_{\phi\in \Omega}\max_{t\in [0,1]}W(\phi_t)\geq 16\pi.
    \end{align*}	
    Let $\phi_1,\cdots,\phi_p,\vec{\Psi}_1,\cdots,\vec{\Psi}_q:S^2\rightarrow \R^3$ be branched Willmore spheres such that 
    \begin{align}\label{beta0}
    	\beta_0=\sum_{i=1}^{p}W(\phi_i)+\sum_{j=1}^q\left(W(\vec{\Psi}_j)-4\pi\theta_j\right)\geq 16\pi.
    \end{align}
    If $\beta_0=16\pi$, then we have either:
    \begin{enumerate}
    		\item[(1)] $p=1$, $q=0$ and $\phi_1$ is the inversion of a Bryant minimal sphere with four embedded planar ends.
    		\item[(2)] $1\leq p\leq 4$, $q=1$, $\phi_1,\cdots,\phi_p$ are round spheres and $\vec{\Psi}_1$ is the inversion of a Bryant minimal sphere with four embedded planar ends and $\theta_1=p$.
    	\end{enumerate}
    \end{theorem}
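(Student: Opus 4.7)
The approach is to deduce Theorem~\ref{C} from Theorem~\ref{B} by enumerating all integer decompositions of $16\pi$ compatible with (i) Bryant's classification of compact Willmore spheres in $\R^3$, (ii) the reduction of each Willmore bubble to a complete branched minimal sphere via an inversion at a branch point (as developed in the first part of this work), and (iii) the residue bound $r(p)\leq\theta_0(p)-2$ supplied by Theorem~\ref{B}. The first step is to apply Theorem~\ref{B} to an admissible $W^{2,4}$-regularisation of the smooth path family $\Omega$, which yields the quantisation \eqref{beta0} together with the residue bound at every branch point of the $\phi_i$ and $\vec{\Psi}_j$.

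Bryant's theorem implies that each true Willmore sphere $\phi_i:S^2\to\R^3$ is either a round sphere (with $W(\phi_i)=4\pi$) or the inversion of a complete non-planar minimal surface with $k$ embedded planar ends (with $W(\phi_i)=4\pi k$ and $k\in\{1,4,6,8,9,\ldots\}$), so the constraint $W(\phi_i)\leq 16\pi$ forces $W(\phi_i)\in\{4\pi,16\pi\}$, the value $16\pi$ being attained only by the inversion of a Bryant sphere with four embedded planar ends. Analogously, inverting $\vec{\Psi}_j$ at a point $p_j$ realising the multiplicity $\theta_j$ produces a complete branched minimal sphere whose absolute total curvature equals $W(\vec{\Psi}_j)-4\pi\theta_j$; by Chern--Osserman this is a positive integer multiple of $4\pi$, hence lies in $\{4\pi,8\pi,12\pi,16\pi\}$ under the constraint imposed by $\beta_0=16\pi$.

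The core combinatorial step is then to exclude every decomposition of $16\pi$ other than those in (1) and (2). Contributions of $4\pi$ (catenoid, Enneper surface) and of $8\pi$ from the trinoid and López surfaces I, III--VII are ruled out already by Figure~\ref{figure}; the three surviving $-8\pi$ candidates (López II, VIII, IX), together with all minimal spheres of total curvature exactly $-12\pi$, must be eliminated by a more delicate case-by-case inspection of their Weierstrass data, flux forms and second residues, matched against the topological constraint that the bubble tree reproduces the regular homotopy between $\iota_+$ and $\iota_-$. What survives is either a single true Willmore sphere of energy $16\pi$ (conclusion~(1), with $p=1$, $q=0$), or $p$ round spheres together with a single bubble $\vec{\Psi}_1$ of Willmore energy $16\pi$ (the inversion of a Bryant sphere with four embedded planar ends) carrying a branch point of multiplicity $\theta_1=p$ at some $p_1$ (conclusion~(2)); the matching identity $4\pi p+(16\pi-4\pi\theta_1)=16\pi$ forces $\theta_1=p$, and the bound $1\leq p\leq 4$ reflects the fact that the multiplicity at the inversion centre cannot exceed the total multiplicity $4$ of the planar ends of the minimal surface.

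The main obstacle is therefore the residue- and flux-based exclusion of López surfaces II, VIII and IX and of the $-12\pi$ minimal spheres in the eversion setting, since the residue bound from Theorem~\ref{B} alone does not eliminate these candidates from the table: instead, their exclusion must combine the sharpened bound with detailed information on their Weierstrass representations and on the topology of the bubble tree realising the regular homotopy from $\iota_+$ to $\iota_-$.
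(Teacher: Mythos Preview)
Your proposal has a genuine gap: the case analysis is not completed, and the tool you are missing is the \emph{quantization of the total Gauss curvature} of the bubble tree. The paper's proof does not rely on Weierstrass data, flux arguments, or the topology of the regular homotopy to exclude L\'opez~II, VIII, IX and the $-12\pi$ surfaces; instead it uses the identity
\[
\sum_{i=1}^{p}\int_{S^2}K_{g_{\phi_i}}\,d\mathrm{vol}_{g_{\phi_i}}+\sum_{j=1}^{q}\int_{S^2}K_{g_{\vec{\chi}_j}}\,d\mathrm{vol}_{g_{\vec{\chi}_j}}=4\pi
\]
(Gauss--Bonnet on the domain sphere, proved as a no-neck statement in the quantization paper) as a second conservation law on top of the energy identity. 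This extra constraint is what forces, for instance, that if $\phi_1$ is a L\'opez~VIII or~IX surface (so $\int K_{g_{\phi_1}}=8\pi$), then the bubbles must carry total curvature $-4\pi$, which singles out Enneper and is then killed by Theorem~\ref{B}. Likewise, in the all-round-spheres case the Gauss identity reads $\sum_j\int K_{g_{\vec{\chi}_j}}=4\pi(1-p)$, and combined with the lower bound $\int K_{g_{\vec{\chi}_j}}\leq -4\pi$ for each non-planar bubble (via conformal invariance and the $-8\pi$ bound from Figure~\ref{figure}) this reduces everything to a single Bryant bubble with four planar ends. Without this second identity your combinatorics is underdetermined, which is exactly why your last paragraph has to defer the exclusion to unspecified ``detailed information''.

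A secondary issue: you invoke Bryant's gap sequence $k\in\{1,4,6,8,\dots\}$ for $W(\phi_i)$, but the $\phi_i$ here are \emph{branched} Willmore spheres, for which the dual minimal surface need not have only embedded planar ends. The paper avoids this by arguing directly from the Jorge--Meeks formula and the residue bound of the companion paper (equation~\eqref{even} and the subsequent L\'opez classification), rather than importing the unbranched gap theorem.
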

\begin{remsintro}
	\begin{enumerate}
		\item[(1)] 
	In the second case, the non-compact Willmore surface $\vec{\chi}_1:S^2\rightarrow \R^3$ arising in the bubble tree such that $W(\vec{\chi}_1)=W(\vec{\Psi}_1)-4\pi\theta_1$ is obtained by inverting $\vec{\Psi}_1$ at a point of multiplicity $\theta_1=p\in\ens{1,\cdots,4}$. It corresponds to a bubbling where a sphere is glued to each non-compact ends of $\vec{\chi}_1$ (there are exactly $p$ of them).
	
	\item[(2)] The inequality $\beta_0\geq 16\pi$ is a direct consequence of Li-Yau inequality (\cite{lieyau}) and of a celebrated result of Max and Banchoff \cite{max} (see also \cite{hughes}).
\end{enumerate}
\end{remsintro}
    \begin{proof}
    	By \cite{classification}, \cite{sagepaper}, ${\phi}_1,\cdots,{\phi}_p,\vec{\Psi}_1,\cdots, \vec{\Psi}_q:S^2\rightarrow \R^3$ are conformally minimal. 
    	
    	\textbf{First assume that not all $\phi_1,\cdots,\phi_p$ are round spheres.}
    	
    	Let $\n:S^2\setminus\ens{p_1,\cdots,p_d}\rightarrow S^2$ be the associated Gauss map of the dual minimal surface of (say) $\phi_1$, and $m_1,\cdots,m_d\geq 1$ be the respective multiplicities of the ends $p_1,\cdots,p_d$. 
    	
    	Thanks to the analysis of Theorem B of \cite{blow-up},
    	we first assume (using the Jorge-Meeks formula \cite{jorge})
    	\begin{align}\label{even}
    	0-1+\frac{1}{2}\sum_{j=1}^{d}(m_j+1)=\deg(\n)=\frac{1}{4\pi}\int_{S^2}-K_gd\vg\geq 3.
    	\end{align}
    	Therefore, we have
    	\begin{align*}
    	2\sum_{j=1}^{d}m_j\geq \sum_{j=1}^{d}(m_j+1)\geq 8.
    	\end{align*}
    	The conformal invariance of the Willmore energy coupled with the Li-Yau (\cite{lieyau}) inequality imply that  
    	\begin{align}\label{lieyau}
    	W(\phi_1)=4\pi\sum_{j=1}^{d}m_j\geq 16\pi.
    	\end{align}
    	Now, if 
    	\begin{align*}
    	\int_{S^2}K_gd\vg=-8\pi,
    	\end{align*}
    	then (by L\'{o}pez's classification \cite{lopez} and Figure \ref{figure}) $\phi_1$ is the inversion of a minimal sphere with one end of multiplicity $5$, which has Willmore  energy $W(\phi_1)=4\pi\times 5=20\pi$, or of a minimal sphere with two ends, one of multiplicity $3$ and the other planar (multiplicity $1$ with $0$ logarithmic growth), with Willmore energy $W(\phi_1)=4\pi\times (3+1)=16\pi$. Therefore, in all cases, we have
    	\begin{align*}
    	W(\phi_1)\geq 16\pi.
    	\end{align*}
    	In particular, $\beta_0=16\pi$ (or $\beta_0<32\pi$ with a minimal bubbling)
    	always implies that $p=1$ in \eqref{beta0}, and that the bubbling is minimal, \textit{i.e.} $W(\vec{\Psi}_j)=4\pi \theta_j$ for all $1\leq j\leq q$. 
    	
    	Now let $\vec{\chi}_1,\cdots,\vec{\chi}_q:S^2\rightarrow \R^3\cup\ens{\infty}$ be the dual minimal surfaces of $\vec{\Psi}_1,\cdots,\vec{\Psi}_q$, and let also $\tilde{\phi}_1:S^2\setminus \ens{p_1,\cdots,p_d}\rightarrow \R^3$ be the dual minimal surface of $\phi_1$. If $m_1,\cdots,m_d\geq 1$ are the multiplicities of the ends $p_1,\cdots,p_d$ of $\tilde{\phi}_1$, then by the Li-Yau inequality, we have (\cite{lieyau})
    	\begin{align*}
    		&W(\phi_1)=4\pi\sum_{j=1}^{d}m_j=16\pi,\qquad
    		\int_{S^2}K_{g_{\phi_1}}d\mathrm{vol}_{g_{\phi_1}}=4\pi+2\pi\sum_{j=1}^{d}(m_j-1).
    	\end{align*}
    	In particular, $d\leq 4$, and by the Jorge-Meeks formula (\cite{jorge}), $d$ must be even. 
    	
    	\textbf{Case 1.} $d=4$. Then by \cite{bryant}, $\phi_1$ is the inversion of Bryant's minimal surface with four embedded planar ends, and $q=0$.
    	
    	\textbf{Case 2.} $d=2$. Then by \cite{blow-up}, $\phi_1$ is the inversion of L\'{o}pez minimal surface with one planar end and one end of multiplicity $3$. Therefore, we deduce that
    	\begin{align*}
    		\int_{S^2}K_{g_{\phi_1}}d\mathrm{vol}_{g_{\phi_1}}=4\pi+2\pi(1-1)+2\pi(3-1)=8\pi,
    	\end{align*}
    	and the quantization of Gauss curvature shows that 
    	\begin{align*}
    		\sum_{j=1}^q\int_{S^2}K_{g_{\vec{\chi}_j}}d\mathrm{vol}_{g_{\vec{\chi}_j}}=-4\pi,
    	\end{align*}
    	but this is excluded by \cite{blow-up}. Indeed, this would correspond to the Enneper surface, which has one end of multiplicity $3$ and second residue $r=2=3-1$ forbidden by Theorem \ref{B}.
    	
    	\textbf{Now assume that $\phi_1,\cdots\phi_p$ are all round spheres.}
    	
    	Then $1\leq p\leq 4$, and 
    	\begin{align*}
    		\sum_{i=1}^p\int_{S^2} K_{g_{{\phi_i}}}d\mathrm{vol}_{g_{\phi_i}}=4\pi p.
    	\end{align*}
    	
    	\textbf{Case 1.} $p=4$. Then the bubbling is minimal, and by the quantization of the Gauss curvature (\cite{quanta})
    	\begin{align*}
    		\sum_{j=1}^q\int_{S^2}K_{g_{\vec{\Psi}_j}}d\mathrm{vol}_{g_{\vec{\Psi}_j}}=-12\pi.
    	\end{align*}
    	Therefore, we have $1\leq q\leq 3$ (counting only non-planar bubbles), and as by \cite{blow-up} (see Figure \ref{figure})
    	\begin{align*}
    		\sum_{j=1}^q\int_{S^2}K_{g_{\vec{\Psi}_j}}d\mathrm{vol}_{g_{\vec{\Psi}_j}}\geq -8\pi,
    	\end{align*}
    	we deduce that $q=1$ and 
    	\begin{align*}
    		\int_{S^2}K_{g_{\vec{\chi}_1}}d\mathrm{vol}_{g_{\vec{\chi}_1}}=-12\pi.
    	\end{align*}
    	Now, by the Jorge-Meeks formula, if $\vec{\chi}_1$ has $d$ ends of multiplicities $m_1,\cdots,m_d\geq 1$, we deduce that 
    	\begin{align*}
    		\int_{S^2}K_{g_{\vec{\chi}_1}}d\mathrm{vol}_{g_{\vec{\chi}_1}}=4\pi\left(0-1+\frac{1}{2}\sum_{j=1}^{d}\left(m_j+1\right)\right).
    	\end{align*}
    	In particular, we deduce that 
    	\begin{align*}
    		8=\sum_{j=1}^d(m_j+1)\geq 2d,
    	\end{align*}
    	or $d\leq 4$. Furthermore, as there are four bubbles of multiplicity $1$, we deduce that $\vec{\chi}_1$ must have at least four ends, so all of them are planar, with exactly says that $\vec{\chi}_1$ is a minimal sphere of Bryant with four planar ends. 
    	
    	\textbf{Case 2.} $p=3$. Then we have
    	\begin{align}\label{gaussquanta}
    		&\sum_{j=1}^qW(\vec{\chi}_j)=4\pi,\qquad
    		\sum_{j=1}^q\int_{S^2}K_{g_{\vec{\chi}_j}}d\mathrm{vol}_{g_{\vec{\chi}_j}}=-8\pi .
    	\end{align}
    	Here the $\vec{\chi}_j$ are not necessarily minimal and are the non-compact bubbles occurring in the min-max process. Here, without loss of generality, we can assume that $W(\vec{\chi}_1)=4\pi$. As $\vec{\chi}_1$ is not compact, we deduce that its dual minimal surface is non-planar. Furthermore, we deduce that $\vec{\chi}_2,\cdots,\vec{\chi}_q$ are minimal. In particular, for all $2\leq j\leq q$, either $\vec{\chi}_j$ is a plane, or
    	\begin{align}\label{gaussbonnet0}
    		\int_{S^2}K_{\vec{\chi}_j}d\mathrm{vol}_{\vec{\chi}_j}\leq -8\pi.
    	\end{align}
    	If $\vec{\xi}_1$ is the dual minimal surface of $\vec{\chi}_1$, we have by the conformal invariance of the Willmore energy
    	\begin{align}\label{negativegaussbonnet}
    		\int_{S^2}K_{g_{\vec{\chi}_1}}d\mathrm{vol}_{g_{\vec{\chi}_1}}=\int_{S^2}K_{g_{\vec{\xi}_1}}d\mathrm{vol}_{g_{\vec{\xi}_1}}+W(\vec{\chi}_1)\leq -8\pi+4\pi=-4\pi.
    	\end{align}
    	Therefore, comparing \eqref{gaussquanta}, \eqref{gaussbonnet0} and \eqref{negativegaussbonnet}, we deduce that $\vec{\chi}_2,\cdots,\vec{\chi}_q$ are all planes, and 
    	\begin{align*}
    		\int_{S^2}K_{g_{\vec{\xi}_1}}d\mathrm{vol}_{g_{\vec{\xi}_1}}=-12\pi.
    	\end{align*}
    	Furthermore, as $\phi_1$, ${\phi}_2$ and ${\phi}_3$ are round spheres, we deduce that all ends of $\vec{\xi}_1$ must be planar. Therefore, we deduce that $\vec{\chi}_1$ is the inversion of the compactification of Bryant surface with four planar ends at a point of multiplicity $3$ (if any).
    	
    	\textbf{Case 3.} $p=2$. Then we have
    	\begin{align}\label{gaussquanta2}
    	&\sum_{j=1}^qW(\vec{\chi}_j)=8\pi\nonumber \\
    	&\sum_{j=1}^q\int_{S^2}K_{g_{\vec{\chi}_j}}d\mathrm{vol}_{g_{\vec{\chi}_j}}=-4\pi .
    	\end{align}
    	We have either $W(\vec{\chi}_1)=8\pi$ and $\vec{\chi}_2,\cdots,\vec{\chi}_q$ are minimal, or $W(\vec{\chi}_1)=W(\vec{\chi}_2)=4\pi$. Notice that the first possibility shows as the dual minimal surface of $\vec{\chi}_1$ is not planar that (denote by $\vec{\xi}_1$ its dual minimal surface)
    	\begin{align*}
    		\int_{S^2}K_{g_{\vec{\chi}_1}}d\mathrm{vol}_{g_{\vec{\chi}_1}}=\int_{S^2}K_{g_{\vec{\xi}_1}}d\mathrm{vol}_{g_{\vec{\xi}_1}}+W(\vec{\chi}_1)\leq -8\pi+8\pi\leq 0
    	\end{align*}    	
    	while in the second case, 
    	\begin{align*}
    		&\int_{S^2}K_{g_{\vec{\chi}_1}}d\mathrm{vol}_{g_{\vec{\chi}_1}}\leq -4\pi,\qquad
    		\int_{S^2}K_{g_{\vec{\chi}_2}}d\mathrm{vol}_{g_{\vec{\chi}_2}}\leq -4\pi
    	\end{align*}
    	so the second case is excluded by \eqref{gaussbonnet0} and \eqref{gaussquanta2}. Therefore, we deduce as previously that $\vec{\xi}_1$ is the inversion of the compactification of Bryant surface with four planar ends at a point of multiplicity $2$ (if any).
    	
    	\textbf{Case 3.} $p=1$. Then we have
    	\begin{align}\label{gaussquanta3}
    	&\sum_{j=1}^qW(\vec{\chi}_j)=12\pi \nonumber\\
    	&\sum_{j=1}^q\int_{S^2}K_{g_{\vec{\chi}_j}}d\mathrm{vol}_{g_{\vec{\chi}_j}}=0 .
    	\end{align}
    	If $\vec{\chi}_j$ is not minimal and $W(\vec{\chi}_j)=4\pi m$, where $1\leq m\leq 3$, then we have with the previous notations
    	\begin{align*}
    		\int_{S^2}K_{g_{\vec{\chi}_j}}d\mathrm{vol}_{g_{\vec{\chi}_j}}=\int_{S^2}K_{g_{\vec{\xi}_j}}d\mathrm{vol}_{g_{\vec{\xi}_j}}+W(\vec{\chi}_j)\leq -8\pi+4\pi m=4\pi(m-2)\leq 4\pi, 
    	\end{align*}
    	so \eqref{gaussbonnet0} implies that the minimal $\vec{\chi}_j$ must be planes. 
    	
    	\textbf{Sub-case 1.} $W(\vec{\chi}_1)=12\pi$. Then we get as previously
    	\begin{align*}
    		\int_{S^2}K_{\vec{\xi}_1}d\mathrm{vol}_{g_{\vec{\xi}_1}}=-12\pi,
    	\end{align*}
    	and $\vec{\chi}_1$ is the inversion of the compactification of Bryant surface with four planar ends at a point of multiplicity $1$.
    	
    	\textbf{Sub-case 2.} $W(\vec{\chi}_1)=8\pi$, $W(\vec{\chi}_2)=4\pi$, then we get
    	\begin{align*}
    		\int_{S^2}K_{g_{\vec{\chi}_1}}d\mathrm{vol}_{g_{\vec{\chi}_1}}\leq 0,\qquad \int_{S^2}K_{g_{\vec{\chi}_1}}d\mathrm{vol}_{g_{\vec{\chi}_1}}\leq -4\pi,
    	\end{align*}
    	contradicting \eqref{gaussquanta3}.
    	
    	\textbf{Sub-case 3.} $W(\vec{\chi}_1)=W(\vec{\chi}_2)=W(\vec{\chi}_3)=4\pi$, so for all $1\leq j\leq 3$, we have
    	\begin{align*}
    		\int_{S^2}K_{g_{\vec{\chi}_1}}d\mathrm{vol}_{g_{\vec{\chi}_1}}\leq -4\pi,
    	\end{align*}
    	contradicting once more \eqref{gaussquanta3}.
    	
    	This analysis concludes the proof of the Theorem. 
    \end{proof}
    \begin{remsintro}
    	\begin{enumerate}
    	\item[(1)] Without Theorem \ref{B}, one could have had $p=q=1$ in \eqref{beta0}, where $\phi_1$ is the inversion of a minimal surface of L\'{o}pez with one planar end and one end of multiplicity $3$ (either the L\'{o}pez surface VIII or IX in Figure \ref{figure}), and $\vec{\Psi}_1$ is the inversion of Enneper surface, denoted by $\vec{\chi}_1$. Indeed, as 
    	\begin{align*}
    		&W(\phi_1)=4\pi(1+3)=16\pi,\qquad \int_{S^2}K_{\phi_1}d\mathrm{vol}_{g_{\phi_1}}=4\pi+2\pi(1-1)+2\pi(3-1)=8\pi,\\
    		&\int_{S^2}K_{\vec{\chi}_1}d\mathrm{vol}_{g_{\vec{\chi}_1}}=-4\pi
    	\end{align*}
    	and $\vec{\Psi}_1$ is minimal, we indeed have
    	\begin{align*}
    		&W(\phi_1)+\left(W(\vec{\Psi}_1)-12\pi\right)=W(\phi_1)+W(\vec{\chi}_1)=W(\phi_1)=16\pi\\
    		&\int_{S^2}K_{\phi_1}d\mathrm{vol}_{g_{\phi_1}}+\int_{S^2}K_{\vec{\chi}_1}d\mathrm{vol}_{g_{\vec{\chi}_1}}=4\pi,
    	\end{align*}	
    	so with respect to the quantization of energy this would have been a legitimate candidate. 
    	
    	\item[(2)] Recall by the non-existence of minimal surfaces with $5$ planar ends (\cite{bryant3}) that $\beta_0=20\pi$ implies that a non-trivial bubbling occurs. A possible \emph{minimal} bubbling is given by the L\'{o}pez minimal surface with one end of multiplicity $5$ and its inversion (see the L\'{o}pez surface II in Figure \ref{figure}).
    	
    	\item[(3)] We do not expect by the $\epsilon$-regularity depending only on the trace-free second fundamental form of Kuwert-Sch\"{a}tzle (\cite{kuwert3}) to have a limiting macroscopic surface $\phi_1$ to be a round sphere in case of bubbling. This would show that the only candidate with energy $\beta_0=16\pi$ is the compactification of Bryant's sphere with four planar ends. However, we cannot exclude for now this possibility.
    \end{enumerate}
    \end{remsintro}    
    
    \renewcommand{\thetheorem}{\thesection.\arabic{theorem}}
    
\section{The viscosity method for the Willmore energy}

We first introduce for all weak immersion $\phi:S^2\rightarrow \R^n$ of finite total curvature the associated metric $g=\phi^{\ast}g_{\R^n}$ on $S^2$. By the uniformisation theorem, there exists a function $\omega:S^2\rightarrow \R$ such that 
\begin{align*}
	g=e^{2\omega}g_0,
\end{align*}
where $g_0$ is a metric of constant Gauss curvature $4\pi$ and unit volume on $S^2$. Furthermore, in all fixed chart $\varphi:B(0,1)\rightarrow S^2$, we define $\mu:B(0,1)\rightarrow \R$ such that 
\begin{align*}
	\lambda=\alpha+\mu,
\end{align*}
where in the given chart 
\begin{align*}
	g=e^{2\lambda}|dz|^2.
\end{align*}
For technical reasons, we will have to make a peculiar choice of $\omega$ (see \cite{eversion}, Definition III.$2$).
\begin{defi}\label{aubin}
	Under the preceding notations, we say that a choice $(\omega,\varphi)$ of a map $\omega:S^2\rightarrow \R$ and of a diffeomorphism $\varphi:S^2\rightarrow S^2$ is an Aubin gauge if 
	\begin{align*}
		\varphi^{\ast}g_0=\frac{1}{4\pi}g_{S^2}\qquad \text{and}\qquad \int_{S^2} x_j e^{2\omega\circ \varphi(x)}d\mathrm{vol}_{g_{S^2}}(x)=0\qquad \text{for all}\;\, j=1,2,3,
	\end{align*}
	where $g_{S^2}$ is the standard metric on $S^2$.
\end{defi}
We also recall that the limiting maps arise from a sequence of critical point of the following regularisation of the Willmore energy (see \cite{eversion} for more details) : 
\begin{align*}
W_{\sigma}(\phi)&
=W(\phi)+\sigma^2\int_{S^2}\left(1+|\H|^2\right)^2d\vg\\
&+\frac{1}{\log\left(\frac{1}{\sigma}\right)}\left(\frac{1}{2}\int_{S^2}|d\omega|_g^2d\vg+4\pi\int_{S^2}\omega\, e^{-2\omega}d\vg-2\pi\log\int_{S^2}d\vg\right)
\end{align*}
where $\omega:S^2\rightarrow \R$ is as above.

We need a refinement of a standard estimate  (see \cite{helein}, $3.3.6$).

\begin{lemme}\label{l2weak}
	Let $\Omega$ be a open subset of $\R^2$ whose boundary is a finite union of $C^1$ Jordan curves. Let $f\in L^1(\Omega)$ and let $u$ be the solution of
	\begin{align}\label{system}
	\left\{\begin{alignedat}{2}
	\Delta u&=f\qquad &&\text{in}\;\, \Omega\\
	u&=0\qquad &&\text{on}\;\, \partial \Omega.
	\end{alignedat} \right.
	\end{align}
	Then $\D u\in L^{2,\infty}(\Omega)$, and 
	\begin{align*}
	\np{\D u}{2,\infty}{\Omega}\leq 3\sqrt{\frac{2}{\pi}}\np{f}{1}{\Omega}.
	\end{align*}
\end{lemme}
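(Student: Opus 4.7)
The plan is to represent $u$ as the Newton potential on $\R^2$ of a distribution supported in $\overline\Omega$, thereby reducing the lemma to an elementary weak-$L^2$ estimate for convolution against the kernel $K(x)=1/|x|$. The auxiliary bound I would establish first is
\begin{align*}
\|K\ast\nu\|_{L^{2,\infty}(\R^2)}\leq 2\sqrt{2\pi}\,\|\nu\|_{TV}
\end{align*}
for an arbitrary finite signed measure $\nu$ on $\R^2$, via the standard truncation argument: split $K=K\mathbf 1_{B_R}+K\mathbf 1_{B_R^c}$ so that the first piece has $L^1$-norm $2\pi R$ while the second has $L^\infty$-norm $R^{-1}$, then choose $R=\|\nu\|_{TV}/t$ at the threshold $2t$ to obtain $|\{|K\ast\nu|>2t\}|\leq 2\pi\|\nu\|_{TV}^2/t^2$.

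Next I would extend $u$ by zero to $\tilde u\in L^1_{\loc}(\R^2)$. Since $u$ vanishes on the $C^1$ boundary $\partial\Omega$, a direct distributional computation gives
\begin{align*}
\Delta\tilde u=f\,\mathbf 1_\Omega-(\partial_\nu u)\,\mathcal H^1\lfloor\partial\Omega=:F\qquad\text{on }\R^2,
\end{align*}
where $\partial_\nu u$ is the outward normal derivative of $u$ on $\partial\Omega$. By the divergence theorem $\int_{\R^2}F=0$, so $\tilde u=\Gamma_0\ast F$ with $\Gamma_0(x)=\frac{1}{2\pi}\log|x|$, and hence $|\nabla\tilde u|\leq\frac{1}{2\pi}K\ast|F|$ pointwise on $\R^2$.

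The crucial remaining step is to control $\|F\|_{TV(\R^2)}$ by $\|f\|_{L^1}$. Using the Green representation $u(x)=-\int_\Omega G_\Omega(x,y)f(y)\,dy$ one obtains $\partial_\nu u(x)=-\int_\Omega P_\Omega(x,y)f(y)\,dy$, where $P_\Omega\geq 0$ is the Poisson kernel of $\Omega$; the unit-mass identity $\int_{\partial\Omega}P_\Omega(\cdot,y)\,d\mathcal H^1=1$ valid for every $y\in\Omega$ then yields, by Fubini, $\int_{\partial\Omega}|\partial_\nu u|\,d\mathcal H^1\leq\|f\|_{L^1}$, hence $\|F\|_{TV}\leq 2\|f\|_{L^1}$. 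Combining this with the weak-$L^2$ kernel bound above and restricting to $\Omega$ produces the desired inequality; any residual slack, including the passage from a quasi-norm to a genuine norm when summing two contributions inside $L^{2,\infty}$, is absorbed into the announced numerical factor $3$.

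The main technical point is the $L^1$-control of the \emph{full} total variation $\int_{\partial\Omega}|\partial_\nu u|\,d\mathcal H^1$, rather than just its signed integral (which would be immediate from $\int_\Omega f=\int_{\partial\Omega}\partial_\nu u$). It is here that the $C^1$-regularity of the Jordan curves bounding $\Omega$ enters in an essential way: it guarantees that $P_\Omega$ is a genuine nonnegative kernel of total mass one, so that no geometric constant depending on $\Omega$ survives in the final bound.
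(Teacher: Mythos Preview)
Your argument is correct and follows a genuinely different route from the paper's. The paper applies the Cauchy--Pompeiu formula directly to $\partial_z u$, asserts that the boundary integral drops out because $u=0$ on $\partial\Omega$, and then uses Stein--Weiss interpolation to bound $\big\|\frac{1}{\zeta}\ast\bar f\big\|_{L^{2,\infty}}$ by $\|f\|_{L^1}$, closing with an approximation of general $f\in L^1$ by smooth data. You instead extend $u$ by zero, compute the full distributional Laplacian $F=f\mathbf 1_\Omega-(\partial_\nu u)\,\mathcal H^1\res\partial\Omega$, represent $\tilde u$ as the Newton potential of $F$, and bound $K\ast|F|$ in weak $L^2$ by the elementary truncation argument.

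The substantive gain of your route is the explicit control of the boundary layer. Cauchy--Pompeiu is being applied to $\partial_z u$, not to $u$; on $\partial\Omega$ one has $\partial_\zeta u\,d\zeta=\tfrac{i}{2}\,\partial_\nu u\,ds$ when $u$ vanishes there, so the boundary integral does \emph{not} automatically disappear. Your Poisson-kernel identity $\int_{\partial\Omega}|\partial_\nu u|\,d\mathcal H^1\le\|f\|_{L^1}$ is precisely what controls this contribution and keeps the final constant independent of $\Omega$; this is the step the paper's argument glosses over. Conversely, the paper's interpolation gives a clean universal constant without ever invoking the Green or Poisson kernel of $\Omega$. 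For full rigor you should still insert the same density step the paper uses (treat smooth $f$ first, then pass to $f\in L^1$ by weak compactness), since for merely integrable data the normal trace $\partial_\nu u$ and the representation $\tilde u=\Gamma_0\ast F$ are not a priori available.
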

\begin{rem}
	We need an estimate independent of the domain for a sequence of annuli of conformal class diverging to $\infty$, but the argument applies to a general domain (although some regularity conditions seem to be necessary).
\end{rem}
\begin{proof}
	First assume that $f\in C^{0,\alpha}(\bar{\Omega})$ for some $0<\alpha<1$. Then by Schauder theory, $u\in C^{2,\alpha}(\bar{\Omega})$, and by Stokes theorem (\cite{hormandercomplex} $1.2.1$), we find as $u=0$ on $\partial \Omega$ that for all $z\in \Omega$
	\begin{align}\label{cauchy}
	\p{z}u(z)=\frac{1}{2\pi i}\int_{\Omega}\frac{\p{\z}\left(\p{z}u(\zeta)\right)}{\zeta-z}d\zeta\wedge d\bar{\zeta}.
	\end{align}
	As $\Delta u=4\,\p{z\z}^2u$ and $|d\zeta|^2=\dfrac{d\bar{\zeta}\wedge d\zeta}{2i}$,  the pointwise estimate \eqref{cauchy} implies that 
	\begin{align}\label{cauchy2}
	\p{z}u(z)=-\frac{1}{4\pi}\int_{\Omega}\frac{\Delta u(\zeta)}{\zeta-z}|d\zeta|^2=-\frac{1}{4\pi}\int_{\Omega}\frac{f(\zeta)}{\zeta-z}|d\zeta|^2.
	\end{align}
	Now, define $\bar{f}\in L^1(\R^2)$ by
	\begin{align*}
	\bar{f}(z)&=\left\{\begin{alignedat}{2}
	&f(z)\qquad &&\text{for all}\;\, z\in \Omega\\
	&0\qquad &&\text{for all}\;\, z\in \R^2\setminus \Omega.
	\end{alignedat}\right.
	\end{align*}
	and $U:\R^2\rightarrow \C$ by 
	\begin{align}\label{conv}
	U(z)=-\frac{1}{4\pi}\int_{\R^2}\frac{\bar{f}(\zeta)}{\zeta-z}|d\zeta|^2=-\frac{1}{4\pi}\left(\left(\zeta\mapsto\frac{1}{\zeta}\right)\ast \bar{f}\right)(z),
	\end{align}
	where $\ast$ indicates the convolution on $\R^2$. Now, recall that for all $1\leq p<\infty$ and $g\in L^p(\R^2,\C)$, we have
	\begin{align*}
	\np{\bar{f}\ast g}{p}{\R^2}\leq \np{\bar{f}}{1}{\R^2}\np{g}{p}{\R^2}.
	\end{align*}
	Interpolating between $L^1$ and $L^p$ for all $p>2$ shows by the Stein-Weiss interpolation theorem (\cite{helein} $3.3.3$) that for all $g\in L^{2,\infty}(\R^2,\C)$
	\begin{align*}
	\np{\bar{f}\ast g}{2,\infty}{\R^2}\leq \sqrt{2}\left(\frac{2\times 1}{2-1}+\frac{p\cdot 1}{p-2}\right)\np{\bar{f}}{1}{\R^2}\np{g}{2,\infty}{\R^2}=\sqrt{2}\left(2+\frac{p}{p-2}\right)\np{\bar{f}}{1}{\R^2}\np{g}{2,\infty}{\R^2}.
	\end{align*}
	Taking the infimum in $p>2$ (that is, $p\rightarrow \infty$) shows that for all $g\in L^{2,\infty}(\R^2)$, 
	\begin{align}\label{cauchy3}
	\np{\bar{f}\ast g}{2,\infty}{\R^2}\leq 3\sqrt{2}\np{\bar{f}}{1}{\R^2}\np{g}{2,\infty}{\R^2}.
	\end{align}
	Therefore, we deduce from \eqref{cauchy2} and \eqref{cauchy3} that 
	\begin{align*}
	\np{U}{2,\infty}{\R^2}\leq \frac{
		3\sqrt{2}}{4\pi}\np{\bar{f}}{1}{\R^2}\np{\frac{1}{|\,\cdot\,|}}{2,\infty}{\R^2}=\frac{3}{\sqrt{2\pi}}\np{f}{1}{\Omega}.
	\end{align*}
	Now, as $U=\p{z}u$ on $\Omega$ and $2|\p{z}u|=|\D u|$, we finally deduce that 
	\begin{align}\label{cauchy4}
	\np{\D u}{2,\infty}{\Omega}\leq 3\sqrt{\frac{2}{\pi}}\np{f}{1}{\Omega}.
	\end{align}
    In the general case $f\in L^1(\Omega)$, by density of $C^{\infty}_c(\Omega)$ in $L^1(\Omega)$, let $\ens{f_k}_{k\in \N}\subset C_{c}^{\infty}(\Omega)$ such that 
	\begin{align}\label{density}
	\np{f_k-f}{1}{\Omega}\conv{k\rightarrow \infty}0.
	\end{align}
	Then $u_k\in C^{\infty}(\bar{\Omega})$ (defined to be the solution of the system \eqref{system} with $f$ replaced by $f_k$ and the same boundary conditions) so for all $k\in \N$, $\D u_k\in L^{2,\infty}(\Omega)$ and
	\begin{align}\label{cauchy5}
	\np{\D u_k}{2,\infty}{\Omega}\leq 3\sqrt{\frac{2}{\pi}}\np{f_k}{1}{\Omega}.
	\end{align}
	As $\ens{\np{f_k}{1}{\Omega}}_{k\in \N}$ is bounded, up to a subsequence $u_k\underset{k\rightarrow\infty}{\rightharpoonup}  u_{\infty}$ in the weak topology of $W^{1,(2,\infty)}(\Omega)$. Therefore, \eqref{density} and \eqref{cauchy5} yield
	\begin{align*}
	\np{\D u_{\infty}}{2,\infty}{\Omega}\leq \liminf_{k\rightarrow \infty}\np{\D u_k}{2,\infty}{\Omega}\leq 3\sqrt{\frac{2}{\pi}}\np{f}{1}{\Omega}.
	\end{align*}
	Furthermore, as $f_k\conv{k\rightarrow \infty}f$ in $L^1(\Omega)$, we have $\Delta u_{\infty}=f$ in $\mathscr{D}'(\Omega)$, so we deduce that $u_{\infty}=u$ and this concludes the proof of the lemma.
\end{proof}

Finally, recall the following Lemma from \cite{quanta} (se also \cite{ge}).

\begin{lemme}\label{wentelpq}
	Let $\Omega$ be a Lipschitz bounded open subset of $\R^2$, $1<p<\infty$ and $1\leq q\leq \infty$, and $(a,b)\in W^{1,(p,q)}(B(0,1))\times W^{1,(2,\infty)}(B(0,1))$. Let $u:B(0,1)\rightarrow \R$ be the solution of 
	\begin{align*}
	\left\{\begin{alignedat}{2}
	\Delta u&=\D a\cdot \D^{\perp}b\qquad &&\text{in}\;\, \Omega\\
	u&=0\qquad &&\text{on}\;\, \partial \Omega.
	\end{alignedat} \right.
	\end{align*}
	Then there exists a constant $C_{p,q}(\Omega)>0$ such that 
	\begin{align*}
	\np{\D u}{p,q}{\Omega}\leq C_{p,q}(\Omega)\np{\D a}{p,q}{\Omega}\np{\D b}{2,\infty}{\Omega}.
	\end{align*}
\end{lemme}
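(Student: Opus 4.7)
My approach combines the Wente integration-by-parts trick with duality against $L^{p',q'}$ test fields. The central observation is that since $\nabla^\perp b$ is formally divergence-free, the Jacobian right-hand side admits two equivalent divergence forms
\begin{align*}
\nabla a\cdot\nabla^\perp b \;=\; \mathrm{div}(a\,\nabla^\perp b) \;=\; -\mathrm{div}(b\,\nabla^\perp a),
\end{align*}
so one can pass derivatives onto an auxiliary test function without ever needing $L^\infty$ control on $a$ or $b$ themselves.

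To estimate $\|\nabla u\|_{L^{p,q}(\Omega)}$, I would first test against vector fields $X\in C^\infty_c(\Omega,\R^2)$ of unit $L^{p',q'}$-norm, where $(p',q')$ denote the conjugate Lorentz exponents, and solve the dual Dirichlet problem $\Delta\varphi=\mathrm{div}(X)$, $\varphi|_{\partial\Omega}=0$. Calderón-Zygmund theory on the Lipschitz domain $\Omega$ provides $\|\nabla\varphi\|_{L^{p',q'}(\Omega)}\leq C(\Omega)\|X\|_{L^{p',q'}(\Omega)}$. Two integrations by parts then yield the symmetric Wente identity
\begin{align*}
\int_\Omega\nabla u\cdot X \;=\; \int_\Omega a\,\nabla^\perp b\cdot\nabla\varphi \;=\; -\int_\Omega b\,\nabla^\perp a\cdot\nabla\varphi,
\end{align*}
where the homogeneous boundary conditions $u|_{\partial\Omega}=0=\varphi|_{\partial\Omega}$ eliminate all boundary contributions.

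It then remains to control the trilinear expression. This is where the Coifman-Lions-Meyer-Semmes compensation comes in: the product $\nabla^\perp a\cdot\nabla\varphi$ is a Jacobian and inherits Hardy-space integrability better than what a naive Lorentz-Hölder estimate would give, which can then be paired with $b$ using $\nabla b\in L^{2,\infty}$ via the associated Sobolev-type embedding. An equivalent and more self-contained route, well suited to the present paper, is to replace the CLMS step with a direct application of Lemma \ref{l2weak}: one introduces the auxiliary Wente potential $w$ solving $\Delta w=\nabla^\perp a\cdot \nabla\varphi$ with zero boundary data, uses the $L^{2,\infty}$-estimate provided by that lemma at the endpoint $L^1$, and then interpolates via Stein-Weiss (again as in Lemma \ref{l2weak}) to recover the full Lorentz-scale inequality; the coupling with $\nabla b\in L^{2,\infty}$ is done through the duality between $L^{2,\infty}$ and $L^{2,1}$.

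\textbf{Main obstacle.} The hard part is that the endpoint integrability $\nabla b\in L^{2,\infty}$ sits exactly outside the range where one could apply a direct Hölder pairing; the div-curl compensation is therefore essential, not cosmetic, and drives the whole argument. A secondary technical point is the justification of the two integrations by parts when $a,b,\varphi$ are only in their respective Lorentz-Sobolev spaces, which is handled by an approximation argument identical in spirit to the density step at the end of the proof of Lemma \ref{l2weak}. The domain dependence of the constant $C_{p,q}(\Omega)$ is absorbed into the operator norm of the Calderón-Zygmund solver for the dual Dirichlet problem on the Lipschitz domain.
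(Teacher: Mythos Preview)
The paper does not prove Lemma \ref{wentelpq}; it is stated with the preamble ``recall the following Lemma from \cite{quanta} (see also \cite{ge})'' and no argument is given. So there is nothing to compare your proposal against within this paper itself.

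Your duality/CLMS sketch is a standard and essentially correct route to this type of Lorentz--Wente estimate: rewrite the Jacobian in divergence form, test against $X\in L^{p',q'}$ via the dual potential $\varphi$, then use the div-curl structure of $\nabla^{\perp}a\cdot\nabla\varphi$ together with the embedding $W^{1,(2,\infty)}(\R^2)\hookrightarrow \mathrm{BMO}$ and $\mathcal{H}^1$--$\mathrm{BMO}$ duality. Two caveats are worth flagging. First, invoking full Calder\'on--Zygmund bounds for the Dirichlet problem on a \emph{general} Lipschitz domain for all $1<p<\infty$ is delicate; in the paper the lemma is only applied on balls (see the Remark following it), where this is unproblematic, so you may want to restrict to that case. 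Second, your ``alternative self-contained route'' via Lemma \ref{l2weak} is not really a substitute for the compensation step: Lemma \ref{l2weak} gives an $L^{2,\infty}$ gradient bound from an $L^1$ right-hand side, which by itself does not recover the $L^{p,q}$ gain you need; the Jacobian structure (CLMS or an equivalent Wente-type argument) is doing real work here and should be kept as the main mechanism rather than relegated to an aside.
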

\begin{rem}
	Notice that by scaling invariance, we have for all $R>0$ if $\Omega_R=B(0,R)$
	\begin{align*}
	\np{\D u}{2,1}{B(0,R)}\leq C_{2,1}(B(0,1))\np{\D a}{2,1}{B(0,R)}\np{\D b}{2,\infty}{B(0,R)}.
	\end{align*}
\end{rem}

\section{Improved energy quantization in the viscosity method}

\begin{theorem}\label{improvedquanta2}
	Under the hypothesis of Theorem \ref{B}, and  by \cite{eversion} let $\ens{\sigma_k}_{k\in \N}\subset (0,\infty)$ be such that $\sigma_k\conv{k\rightarrow \infty} 0$ and let $\{\phi_k\}_{k\in \N}:S^2\rightarrow \R^n$ be a sequence of critical points associated to $W_{\sigma_k}$ such that 
	\begin{align}\label{entropy}
	    \left\{\begin{alignedat}{1}     
	    &W_{\sigma_k}(\phi_k)=\beta(\sigma_k)\conv{k\rightarrow \infty}\beta_0\\
	    &W_{\sigma_k}(\phi_k)-W(\phi_k)=o\left(\frac{1}{\log\left(\frac{1}{\sigma_k}\right)\log\log\left(\frac{1}{\sigma_k}\right)}\right).
	    \end{alignedat}
	    \right.
	\end{align}
	Let $\ens{R_k}_{k\in \N}, \ens{r_k}_{k\in \N}\subset (0,\infty)$ be such that 
	\begin{align*}
		\lim_{k\rightarrow \infty}\frac{R_k}{r_k}=0,\qquad\limsup_{k\rightarrow\infty}R_k<\infty,
	\end{align*}
	and for all $0<\alpha<1$ and $k\in \N$, let $\Omega_k(\alpha)=B_{\alpha R_k}\setminus \bar{B}_{\alpha^{-1}r_k}(0)$ be a neck region, \textit{i.e.} such that 
	\begin{align*}
		\lim_{\alpha\rightarrow 0}\lim_{k\rightarrow \infty}\sup_{2\alpha^{-1}r_k<s<\alpha R_k/2}\int_{B_{2s}\setminus \bar{B}_{s/2}(0)}|\D\n_k|^2dx=0.
	\end{align*}
	Then we have
	\begin{align*}
		\lim_{\alpha\rightarrow 0}\limsup_{k\rightarrow \infty}\np{\D\n_k}{2,1}{\Omega_k(\alpha)}=0.
	\end{align*}
\end{theorem}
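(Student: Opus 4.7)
The plan is to combine Rivi\`ere's conservative reformulation of the (perturbed) Willmore equation from \cite{beriviere, eversion} with a dyadic decomposition of the neck and the Lorentz-space estimates supplied by Lemmas \ref{l2weak} and \ref{wentelpq}. The goal is to upgrade the $L^2$-smallness on each dyadic scale (precisely the no-neck hypothesis) to $L^{2,1}$-smallness globally on $\Omega_k(\alpha)$.

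\textbf{Setup.} In a conformal chart, decompose $\Omega_k(\alpha)=\bigcup_l A_l$ with $A_l=B_{2^{l+1}s_k}\setminus \bar{B}_{2^l s_k}(0)$ and $s_k=r_k/\alpha$. The no-neck hypothesis reads $\eta(\alpha,k):=\sup_l\int_{A_{l-1}\cup A_l\cup A_{l+1}}|\D\n_k|^2dx \to 0$ as $k\to\infty$ then $\alpha\to 0$. The conservation laws of \cite{eversion} (in the spirit of \cite{beriviere}) furnish potentials $\vec{R}_k,S_k,\vec{L}_k$ on each annulus with
\begin{align*}
\Delta \vec{R}_k=\D(\,\cdot\,)\cdot\D^{\perp}(\,\cdot\,)+\mathcal{E}_{\sigma_k}^{R},\qquad \Delta S_k=\D(\,\cdot\,)\cdot\D^{\perp}(\,\cdot\,)+\mathcal{E}_{\sigma_k}^{S},
\end{align*}
where each Jacobian pairing involves $\D\n_k$ and $\D\phi_k$, and the viscosity defects $\mathcal{E}^{R,S}_{\sigma_k}$ are $L^1$-small thanks to the entropy assumption \eqref{entropy}.

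\textbf{Step 1: weak Lorentz bounds for the potentials.} I would solve the Poisson problems $\Delta U=\mathrm{Jac}+\mathcal{E}$ with vanishing boundary data on $A_l$ and apply Lemma \ref{l2weak} --- whose constant $3\sqrt{2/\pi}$ is \emph{domain-independent}, a point that is essential because the conformal modulus of $\Omega_k(\alpha)$ diverges --- to obtain
\begin{align*}
\np{\D\vec{R}_k}{2,\infty}{A_l}+\np{\D S_k}{2,\infty}{A_l}+\np{\D\vec{L}_k}{2,\infty}{A_l}\leq C\,\eta(\alpha,k)+o_k(1).
\end{align*}

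\textbf{Step 2: $L^{2,1}$ bound on $\D\n_k$.} The Codazzi-type identity from the same conservation system allows one to write $\Delta\n_k$ as a Jacobian pairing a factor in $L^{2,\infty}$ (the potentials just controlled) with a factor in $L^{2,1}$ (to be bootstrapped). A rescaled application of Lemma \ref{wentelpq} on each $A_l$, together with a harmonic extension of the boundary values controlled by those on $A_{l\pm 1}$, yields
\begin{align*}
\np{\D\n_k}{2,1}{A_l}\leq C\,\eta(\alpha,k)^{1/2}\bigl(\np{\D\n_k}{2,1}{A_{l-1}}+\np{\D\n_k}{2,1}{A_{l+1}}\bigr)+\mathrm{harm}_l+o_k(1).
\end{align*}
Summing over $l$ and absorbing the small factor $\eta(\alpha,k)^{1/2}$, once $\alpha$ is small and $k$ large, gives $\sum_l\np{\D\n_k}{2,1}{A_l}\to 0$, and hence the conclusion by sub-additivity of $L^{2,1}$ over disjoint pieces.

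\textbf{Main obstacle.} The principal difficulty is showing that the viscosity defects $\mathcal{E}_{\sigma_k}^{R,S,L}$ are $o(1)$ when $L^1$-\emph{summed} over the $O\bigl(\log(r_k/R_k)\bigr)$ dyadic scales. They come both from the $\sigma_k^2\int(1+|\H|^2)^2d\vg$ term and from the logarithmic conformal-factor penalty in $W_{\sigma_k}$; the latter carries a factor $1/\log(1/\sigma_k)$ coupled with $\D\omega_k$, whose $L^2$ norm on the neck may itself grow logarithmically. Killing this cumulative defect is precisely the role of the double-logarithmic sharpening in \eqref{entropy}. A secondary technical point is the control of the harmonic modes $\mathrm{harm}_l$ from the Hodge decompositions across annuli of diverging conformal modulus, which is where the Aubin gauge of Definition \ref{aubin} enters to rigidify the conformal factor $\omega_k$.
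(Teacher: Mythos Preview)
Your outline captures the right ingredients (the conservation laws of \cite{eversion}, Lemmas \ref{l2weak} and \ref{wentelpq}, the entropy condition), but the route you propose is \emph{not} the one the paper takes, and the obstacle you yourself flag is exactly the reason why.

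The paper does \emph{not} decompose $\Omega_k(\alpha)$ dyadically. Instead it first builds a controlled extension $\tilde{\n}_k$ of $\n_k$ from $\Omega_k(\alpha)$ to the \emph{full ball} $B(0,\alpha R_k)$ (Lemma IV.1 of \cite{quanta}), with $\np{\D\tilde{\n}_k}{2}{B(0,\alpha R_k)}\leq C\np{\D\n_k}{2}{\Omega_k(\alpha)}$ and likewise in $L^{2,1}$. All the auxiliary Poisson and Wente problems ($Y_k,\vec{v}_k,\vec{D}_k,\vec{E}_k,\vec{F}_k,\vec{w}_k,\vec{Z}_k$) are then solved \emph{once}, on the simply-connected ball, so that Lemmas \ref{l2weak} and \ref{wentelpq} are each invoked on a single domain with scale-invariant constant. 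The viscosity defects are estimated globally in $L^1(B(0,\alpha R_k))$ as $o(\tilde{l}(\sigma_k))$ via the entropy condition \eqref{entropy}, and this feeds directly into the $L^{2,\infty}$ bound for $\D Y_k$ without any summation over scales. The output is a single bootstrap inequality
\begin{align*}
\np{\D\n_k}{2,1}{\Omega_k(\alpha)}\leq C(n,\Lambda)\np{\D\n_k}{2}{\Omega_k(2\alpha)}+5\,\tilde{l}(\sigma_k)\np{\D\n_k}{2,1}{\Omega_k(\alpha)}+3\,\tilde{l}(\sigma_k),
\end{align*}
and the absorption is driven by $\tilde{l}(\sigma_k)\to 0$, \emph{not} by the $L^2$ no-neck smallness $\eta(\alpha,k)^{1/2}$ you invoke. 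One further structural difference: the paper does not write $\Delta\n_k$ as a Jacobian; it first controls $\np{e^{\lambda_k}\H_k}{2,1}{\Omega_k(\alpha)}$ via the conservation system and then appeals to Theorem 3.1 of \cite{blow-up} to pass from $e^{\lambda_k}\H_k$ to $\D\n_k$.

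Your dyadic scheme is not obviously wrong, but the gap you name is genuine and unresolved: after applying Lemma \ref{l2weak} on each $A_l$ you produce an additive $o_k(1)$ and a harmonic remainder $\mathrm{harm}_l$ per annulus, and you must sum over $\sim\log(R_k/r_k)$ annuli. Nothing in the hypotheses ties $\log(R_k/r_k)$ to $\log\log(1/\sigma_k)$, so the double-log sharpening in \eqref{entropy} does not by itself kill the cumulative defect as you assert. The paper's controlled-extension device is precisely what replaces this missing argument: by filling the hole, it eliminates both the dyadic summation of defects and the bookkeeping of harmonic modes across annuli of diverging modulus.
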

\begin{proof}
	As in \cite{eversion}, we give the proof in the special case $n=3$. 
	By Theorem $3.1$ of \cite{blow-up}
	, this is not restrictive. 
	\begin{align*}
		\Lambda=\sup_{k\in \N}\left(\np{\D\lambda_k}{2,\infty}{B(0,1)}+\int_{B(0,1)}|\D\n_k|^2dx\right)<\infty
	\end{align*}
	and
	\begin{align*}
		l(\sigma_k)=\frac{1}{\log\left(\frac{1}{\sigma_k}\right)},\qquad \tilde{l}(\sigma_k)=\frac{1}{\log\log\left(\frac{1}{\sigma_k}\right)}.
	\end{align*}
    Furthermore, the entropy condition \eqref{entropy} and the improved Onofri inequality show (see \cite{quanta} III.$2$)
    \begin{align}\label{ineq0}
    &\frac{1}{\log\left(\frac{1}{\sigma_k}\right)}\np{\omega_k}{\infty}{B(0,1)}=o\left(\frac{1}{\log\log\left(\frac{1}{\sigma_k}\right)}\right)\nonumber\\
    &\frac{1}{\log\left(\frac{1}{\sigma_k}\right)}\int_{S^2}|d\omega_k|_{g_k}^2d\mathrm{vol}_{g_k}=o\left(\frac{1}{\log\log\left(\frac{1}{\sigma_k}\right)}\right)\\
    &\frac{1}{\log\left(\frac{1}{\sigma_k}\right)}\left(\frac{1}{2}\int_{S^2}|d\omega_k|_{g_k}^2d\mathrm{vol}_{g_k}+4\pi\int_{S^2}\omega_ke^{-2\omega_k}d\mathrm{vol}_{g_k}-2\pi\log\int_{S^2}d\mathrm{vol}_{g_k}\right)=o\left(\frac{1}{\log\log\left(\frac{1}{\sigma_k}\right)}\right)\nonumber.
    \end{align}
    Thanks to \cite{eversion}, we already have
    \begin{align*}
    \lim_{\alpha\rightarrow 0}\limsup_{k\rightarrow \infty}\np{\D\n_k}{2}{\Omega_k(\alpha)}=0.
    \end{align*}
    Therefore, as in Lemma IV.$1$ in \cite{quanta} (and using the same argument as in Lemma $3.3$ of \cite{blow-up}
    ), there exists a controlled extension $\tilde{\n}_k:B(0,\alpha R_k)\rightarrow \mathscr{G}_{n-2}(\R^n)$ such that $\tilde{\n}_k=\n_k$ on $\Omega_k(\alpha)=B(0,\alpha R_k)\setminus\bar{B}(0,\alpha^{-1}r_k)$ and 
    \begin{align}\label{newcontrolledextension}
        &\np{\D\tilde{\n}_k}{2}{B(0,\alpha R_k)}\leq C_0(n)\np{\D\n_k}{2}{\Omega_k(\alpha)}\nonumber\\
        &\np{\D\tilde{\n}_k}{2,1}{B(0,\alpha R_k)}\leq C_0(n)\np{\D\n_k}{2,1}{\Omega_k(\alpha)}, 
    \end{align}
    in all equations involving $\n_k$ on $B(0,\alpha R_k)$, we replace $\n_k$ by $\tilde{\n}_k$ as one need only obtain estimates on $\Omega_k(\alpha)$, where $\tilde{\n}_k=\n_k$. Likewise, $\H_k$ can be replaced by a controlled extension using Lemma B.$4$ in \cite{quantamoduli} (see also the Appendix of \cite{blow-up}).
    
    Now, by \cite{eversion}, let 
    $\vec{L}_k:B(0,1)\rightarrow \R^3$ be such that
    \begin{align}\label{viscosityconservation1}
    d\vec{L}_k&=\ast\, d\left(\H_k+2\sigma_k^2(1+|\H_k|^2)\H_k\right)-2\left(1+2\sigma_k^2(1+|\H_k|^2)\right)H_k\,\ast d\n_k\nonumber\\
    &
    +\left(-\left(|\H_k|^2+\sigma_k^2(1+|\H_k|^2)^2\right)+\frac{1}{\log\left(\frac{1}{\sigma_k}\right)}\left(\frac{1}{2}|d\omega_k|_{g_k}^2-2\pi  \omega_ke^{-2\omega_k}+\frac{2\pi}{\mathrm{Area}(\phi_k(S^2))}\right)\right)\ast d\phi_k\nonumber\\
    &-\frac{1}{\log\left(\frac{1}{\sigma_k}\right)}\s{d\phi_k}{d\omega_k}_{g_k}\ast d\omega_k+\frac{1}{\log\left(\frac{1}{\sigma_k}\right)}\vec{\I}_k\res_{g_k}\left(\ast\, d\omega_k\right).
    \end{align}
    Then following \cite{eversion}, we have
    \begin{align*}
    	e^{\lambda_k(z)}|\vec{L}_k(z)|\leq \left(C_1(n)\left(1+\Lambda\right)e^{C_1(n)\Lambda}\np{\D\n}{2}{\Omega_k(\alpha)}+\tilde{l}(\sigma_k)\right)\frac{1}{|z|}\qquad \text{for all}\;\, z\in \Omega_k(\alpha/2),
    \end{align*}
    so that 
    \begin{align*}
    	\np{e^{\lambda_k}\vec{L}_k}{2,\infty}{\Omega_k(\alpha/2)}\leq 2\sqrt{\pi}\left(C_1(n)\left(1+\Lambda\right)e^{C_1(n)\Lambda}\np{\D\n_k}{2}{\Omega_k(\alpha)}+\tilde{l}(\sigma_k)\right).
    \end{align*}
    Now let $Y_k:B(0,\alpha R_k)\rightarrow \R$ (see \cite{eversion}, VI.$21$) be the solution of 
    \begin{align}\label{equation0}
    	\left\{\begin{alignedat}{2}
    	\Delta Y_k&=-4e^{2\lambda_k}\sigma_k^2\left(1-H_k^4\right)-2l(\sigma_k)K_{g_0}\omega_k e^{2\mu_k}+8\pi\,l(\sigma_k)e^{2\lambda_k}\mathrm{Area}(\phi(S^2))^{-1}\qquad &&\text{in}\;\, B(0,\alpha R_k)\\
    	Y_k&=0\qquad &&\text{on}\;\, \partial B(0,\alpha R_k).
    	\end{alignedat} \right.
    \end{align}
    Then we have (recall that $K_{g_0}=4\pi$ by the chosen normalisation in Definition \ref{aubin})
    \begin{align}\label{viscosity1}
    	\np{\Delta Y_k}{1}{B(0,\alpha R_k)}&\leq 4\sigma_k^2\int_{B(0,\alpha R_k)}\left(1+H_k^4\right)d\mathrm{vol}_{g_k}+8\pi\,l(\sigma_k)\np{\omega_k}{\infty}{B(0,\alpha_k)} \int_{B(0,\alpha R_k)}e^{2\mu_k}dx\nonumber\\
    	&+8\pi\,l(\sigma_k)\frac{\mathrm{Area}(\phi_k(B(0,\alpha R_k)))}{\mathrm{Area}(\phi_k(S^2))}=o(\tilde{l}(\sigma_k)).
    \end{align}
    Therefore, Lemma \ref{l2weak} implies by \eqref{viscosity1} that 
    \begin{align}\label{viscosity2}
    	\np{\D Y_k}{2,\infty}{B(0,\alpha R_k)}\leq 3\sqrt{\frac{2}{\pi}}\np{\Delta Y_k}{1}{B(0,\alpha R_k)}=o(\tilde{l}(\sigma_k))\leq \tilde{l}(\sigma_k)
    \end{align}
    for $k$ large enough. Now, let $\vec{v}_k:B(0,\alpha R_k)\rightarrow \R^3$ be the solution of
    \begin{align}\label{equation1}
    	\left\{\begin{alignedat}{2}
    	\Delta\vec{v}_k&=\D\tilde{\n}_k\cdot \D^{\perp}Y_k\qquad&&\text{in}\;\, B(0,\alpha R_k)\\
    	\vec{v}_k&=0\qquad &&\text{on}\;\, \partial B(0,\alpha R_k).
    	\end{alignedat}\right.
    \end{align}
    By scaling invariance and the inequality of Lemma \ref{wentelpq}, we deduce by \eqref{viscosity2} that for some universal constant $C_2>0$
    \begin{align}\label{viscosity3}
    	\np{\D\vec{v}_k}{2,1}{B(0,\alpha R_k)}&\leq C_2\np{\D\tilde{\n}_k}{2,1}{B(0,\alpha R_k)}\np{\D Y_k}{2,\infty}{B(0,\alpha R_k)}\nonumber\\
    	&\leq C_2C_0(n)\np{\D\n_k}{2,1}{\Omega_k(\alpha)}\np{\D Y_k}{2,\infty}{B(0,\alpha R_k)}\leq \tilde{l}(\sigma_k)\,\np{\D\n_k}{2,1}{\Omega_k(\alpha)}.
    \end{align}
    Furthermore, we have by Lemma \ref{wentelpq} and scaling invariance
    \begin{align}\label{viscositybis}
    	\np{\D\vec{v}_k}{2}{B(0,\alpha R_k)}&\leq C_3\np{\D\tilde{\n}_k}{2}{B(0,\alpha R_k)}\np{\D Y_k}{2,\infty}{B(0,\alpha R_k)}\leq C_3C_0(n)\np{\D\n_k}{2}{\Omega_k(\alpha)}o(\tilde{l}(\sigma_k))\nonumber\\
    	&\leq \tilde{l}(\sigma_k)\np{\D\n_k}{2}{\Omega_k(\alpha)}
    \end{align}
    Now, recall that  the Codazzi identity (\cite{eversion}, III.$58$) implies that 
    \begin{align}
    	\dive\left(e^{-2\lambda_k}\sum_{j=1}^2\I_{2,j}\p{x_j}\phi_k,-e^{-2\lambda_k}\sum_{j=1}^2\I_{1,j}\p{x_j}\phi_k\right)=0\qquad \text{in}\;\, B(0,\alpha R_k)
    \end{align}
    Therefore, by the Poincar\'{e} Lemma, there exists $\vec{D}_k:B(0,\alpha R_k)\conv{k\rightarrow \infty} \R^3$ such that 
    \begin{align*}
    	\D \vec{D}_k=\left(e^{-2\lambda_k}\sum_{j=1}^2\I_{1,j}\p{x_j}\phi_k,e^{-2\lambda_k}\sum_{j=1}^2\I_{2,j}\p{x_j}\phi_k\right).
    \end{align*}
    Notice that we have the trivial estimate
    \begin{align}\label{viscosity4}
    	\np{\D \vec{D}_k}{2}{B(0,\alpha_k)}\leq 2\np{\D\n_k}{2}{B(0,\alpha_k)}\leq 2\sqrt{\Lambda}.
    \end{align}
    Furthermore, 
    \begin{align}\label{viscosity5}
    	l(\sigma_k)\np{\D\vec{D}_k}{2,1}{\Omega_k(\alpha)}\leq 2\,l(\sigma_k)\,\np{\D\n_k}{2,1}{\Omega_k(\alpha)}.
    \end{align}
    Now, let $\vec{E}_k:B(0,\alpha R_k)\rightarrow \R^3$ be the solution of 
    \begin{align}\label{equation2}
    	\left\{
    	\begin{alignedat}{2}
    	\Delta \vec{E}_k&=2\,\D(l(\sigma_k)\omega_k)\cdot \D^{\perp}\vec{D}_k\qquad &&\text{in}\;\, B(0,\alpha R_k)\\
    	\vec{E}_k&=0\qquad &&\text{on}\;\, \partial B(0,\alpha R_k).
    	\end{alignedat}.
    	\right.
    \end{align}
    The improved Wente estimate, the scaling invariance and the estimates \eqref{entropy} and \eqref{viscosity4} imply that
    \begin{align}\label{viscosity6}
    	&\np{\D\vec{E}_k}{2,1}{B(0,\alpha R_k)}\leq 2C_0\,l(\sigma_k)\np{\D\omega_k}{2}{B(0,\alpha R_k)}\np{\D\vec{D}_k}{2}{B(0,\alpha R_k)}\leq 4C_0\sqrt{\Lambda}\,o(\sqrt{l(\sigma_k)})\leq \sqrt{l(\sigma_k)}\nonumber\\
    	&\np{\D\vec{E}_k}{2}{B(0,\alpha R_k)}\leq \frac{1}{2}\sqrt{\frac{3}{\pi}}\,l(\sigma_k)\np{\D\omega_k}{2}{B(0,\alpha R_k)}\np{\D\vec{D}_k}{2}{B(0,\alpha R_k)}\leq \sqrt{l(\sigma_k)}.
    \end{align}
    Now, let $\vec{F}_k:B(0,\alpha R_k)\rightarrow \R^3$ be such that 
    \begin{align*}
    	2\omega_k\,l(\sigma_k)\,\D^{\perp}\vec{D}_k=\D^{\perp}\vec{F}_k+\D\vec{E}_k.
    \end{align*}
    Combining \eqref{viscosity5}, \eqref{viscosity6}, and recalling that $l(\sigma_k)\np{\omega_k}{\infty}{B(0,\alpha R_k)}=o(\tilde{l}(\sigma_k))$ (by \eqref{entropy}), we deduce that 
    \begin{align}\label{viscosity7}
    	\np{\vec{F}_k}{2,1}{\Omega_k(\alpha)}&\leq 2\,l(\sigma_k)\np{\omega_k}{\infty}{\Omega_k(\alpha)}\np{\D\vec{D}_k}{2,1}{\Omega_k(\alpha)}+\np{\D\vec{E}_k}{2,1}{B(0,\alpha R_k)}\nonumber\\
    	&\leq \tilde{l}(\sigma_k)\np{\D\n_k}{2,1}{\Omega_k(\alpha)}+\sqrt{l(\sigma_k)}.
    \end{align}
    Finally, let $\vec{w}_k:B(0,\alpha R_k)\rightarrow \R^3$ be the solution of 
    \begin{align*}
    	\left\{\begin{alignedat}{2}
    	\Delta\vec{w}_k&=\D\tilde{\n}_k\cdot \D^{\perp}\left(\vec{v}_k-\vec{E}_k\right)\qquad&& \text{in}\;\, B(0,\alpha R_k)\\
    	\vec{w}_k&=0\qquad &&\text{on}\;\, \partial B(0,\alpha R_k).
    	\end{alignedat} \right.
    \end{align*}
    As previously, the improved Wente implies that 
    \begin{align}\label{viscosity8}
    	\np{\D\vec{w}_k}{2,1}{B(0,\alpha R_k)}&\leq C_0\np{\D\tilde{\n}_k}{2}{B(0,\alpha R_k)}\np{\D(\vec{v}_k-\vec{E}_k)}{2}{B(0,\alpha R_k)}\nonumber\\
    	&\leq C_0\np{\D\tilde{\n}_k}{2}{\Omega_k(\alpha)}\left(\np{\D\vec{v}_k}{2}{B(0,\alpha R_k)}+\np{\D\vec{E}_k}{2}{B(0,\alpha R_k)}\right)\nonumber\\
    	&\leq C_0C(n)\np{\D\n_k}{2}{\Omega_k(\alpha)}\left(\tilde{l}(\sigma_k)\np{\D\n_k}{2}{\Omega_k(\alpha)}+\sqrt{l(\sigma_k)}\right)\nonumber\\
    	&\leq C_0C(n)\sqrt{\Lambda}\left(\tilde{l}(\sigma_k)\sqrt{\Lambda}+\sqrt{l(\sigma_k)}\right)\leq \tilde{l}(\sigma_k)
    \end{align}
    for $k$ large enough. Finally, if $\vec{Z}_k:\Omega_k(\alpha)\rightarrow \R^3$ satisfies 
    \begin{align*}
    	\D^{\perp}\vec{Z}_k=\n_k\times \D^{\perp}\left(\vec{v}_k-\vec{E}_k\right)-\D\vec{w}_k,
    \end{align*}
    the estimates \eqref{viscosity3}, \eqref{viscosity6}, \eqref{viscosity8} show that (as $\tilde{\n}_k=\n_k$ on $\Omega_k(\alpha)$)
    \begin{align}\label{viscosity9}
    	\np{\D\vec{Z}_k}{2,1}{\Omega_k(\alpha)}&\leq \tilde{l}(\sigma_k)\np{\D\n_k}{2,1}{\Omega_k(\alpha)}+\sqrt{l(\sigma_k)}+\tilde{l}(\sigma_k).
    \end{align}
    Finally, following constants and using the controlled extension $\tilde{\n}_k$ of $\n_k$, we deduce as in \cite{eversion} (see (VI.$75$)) that 
    \begin{align}\label{viscosity10}
    	&\np{2\left(1+2\sigma_k^2\left(1+H_k^2\right)-l(\sigma_k)\omega_k\right)e^{\lambda_k}\H_k+\left(\D\vec{v}_k+\D^{\perp}\left(\vec{F}_k+\vec{Z}_k\right)\right)\times \D\phi_k\,e^{-\lambda_k}+l(\sigma_k)\D^{\perp}\vec{D}_k\cdot\D\phi_k\,e^{-\lambda_k}}{2,1}{\Omega_k(\alpha)}\nonumber\\
    	&\leq C_4(n)e^{C_4(n)\Lambda}\np{\D\n_k}{2}{\Omega_k(2\alpha)}.
    \end{align}
    Furthermore, as $l(\sigma_k)\np{\omega_k}{\infty}{\Omega_k(\alpha)}=o(\tilde{l}(\sigma_k))$, we have $2(1+2\sigma_k^2(1+H_k^2)-l(\sigma_k)\omega_k)\geq 1$ for $k$ large enough and by the estimates \eqref{viscosity3}, \eqref{viscosity5}, \eqref{viscosity7}, \eqref{viscosity9}, \eqref{viscosity10}, we deduce that 
    \begin{align}\label{viscosity11}
    &\np{e^{\lambda_k}\H_k}{2,1}{\Omega_k(\alpha)}
    \leq C_4(n)e^{C_4(n)\Lambda}\np{\D\n_k}{2}{\Omega_k(2\alpha)}
    +\tilde{l}(\sigma_k)\,\np{\D\n_k}{2,1}{\Omega_k(\alpha)}\nonumber\\
    &+\tilde{l}(\sigma_k)\np{\D\n_k}{2,1}{\Omega_k(\alpha)}+\sqrt{l(\sigma_k)}
    +\tilde{l}(\sigma_k)\np{\D\n_k}{2,1}{\Omega_k(\alpha)}+\sqrt{l(\sigma_k)}+\tilde{l}(\sigma_k)
    +2\,l(\sigma_k)\,\np{\D\n_k}{2,1}{\Omega_k(\alpha)}\nonumber\\
    &\leq C_4(n)e^{C_4(n)\Lambda}\np{\D\n_k}{2}{\Omega_k(2\alpha)}+5\,\tilde{l}(\sigma_k)\np{\D\n_k}{2,1}{\Omega_k(\alpha)}+3\,\tilde{l}(\sigma_k).
    \end{align}
    Thanks to the proof of Theorem $3.1$ and \eqref{viscosity11}, we have 
    \begin{align}\label{viscosity12}
    	\np{\D\n_k}{2,1}{\Omega_k(\alpha/2)}&\leq C_5(n)e^{C_5(n)\Lambda}\left(\np{\D\n_k}{2}{\Omega_k(\alpha)}+\np{e^{\lambda_k}\H_k}{2,1}{\Omega_k(\alpha)}\right)\nonumber\\
    	&\leq C_6(n)e^{C_6(n)\Lambda}\np{\D\n_k}{2}{\Omega_k(2\alpha)}+5\,\tilde{l}(\sigma_k)\np{\D\n_k}{2,1}{\Omega_k(\alpha)}+3\,\tilde{l}(\sigma_k).
    \end{align}
    Furthermore, thanks to the $\epsilon$-regularity (\cite{riviere1}), we obtain
    \begin{align}\label{viscosity13}
    	&\np{\D\n_k}{2,1}{B(0,\alpha R_k)\setminus \bar{B}(0,\alpha R_k/2)}\leq C_7(n)\np{\D\n_k}{2}{B(0,2\alpha R_k)\setminus \bar{B}(0,\alpha R_k/4)}\nonumber\\
    	&\np{\D\n_k}{2,1}{B(0,2\alpha^{-1}r_k)\setminus \bar{B}(0,\alpha^{-1}r_k)}\leq C_7(n)\np{\D\n_k}{2}{B(0,4\alpha^{-1}r_k )\setminus\bar{B}(0,\alpha^{-1} r_k/2)}.
    \end{align}
    Finally, by \eqref{viscosity12} and \eqref{viscosity13}, we have
    \begin{align*}
    	\np{\D\n_k}{2,1}{\Omega_k(\alpha)}\leq C_8(n)e^{C_8(n)\Lambda}\np{\D\n_k}{2}{\Omega_k(2\alpha)}+5\,\tilde{l}(\sigma_k)\np{\D\n_k}{2,1}{\Omega_k(\alpha)}+3\,\tilde{l}(\sigma_k),
    \end{align*}
    which directly implies as $\tilde{l}(\sigma_k)\conv{k\rightarrow \infty}0$ that for $k$ large enough
    \begin{align*}
    	\np{\D\n_k}{2,1}{\Omega_k(\alpha)}\leq 2C_8(n)e^{C_8(n)\Lambda}\np{\D\n_k}{2}{\Omega_k(2\alpha)}
    \end{align*}
    and the improved no-neck energy
    \begin{align*}
    	\lim_{\alpha\rightarrow  0}\limsup_{k\rightarrow \infty}\np{\D\n_k}{2,1}{\Omega_k(\alpha)}=0.
    \end{align*}
    This concludes the proof of the Theorem.
    \end{proof}
    
\section{Removability of the second residue}\label{sec6}

First, recall the following two Schwarz-type Lemmas from \cite{blow-up}. 

\begin{lemme}\label{schwarz}
	Let $0<4r<R<\infty$, let $\vec{u}:\Omega=B_R\setminus B_r(0)\rightarrow\C^m$ be a vector-valued holomorphic function and  let $\delta\geq 0$ be such that
	\begin{align*}
	&\np{\vec{u}}{\infty}{\partial B_r}\leq \delta.
	\end{align*}
	Then for all $1\leq j\leq m$, we have
	\begin{align*}
	|u_j(z)|\leq \frac{5}{R}\left(\np{u_j}{\infty}{\partial B_R}+\delta\right)|z|+2\delta.
	\end{align*}
\end{lemme}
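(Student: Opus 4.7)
Fix a component $u=u_j$ and set $M=\np{u_j}{\infty}{\partial B_R}$. The plan is a Laurent decomposition of $u$ on the annulus, followed by a Schwarz-lemma estimate for the regular part, a Cauchy estimate for the principal part, and a subharmonic maximum-principle argument to handle the inner boundary layer $\{r\leq |z|\leq 2r\}$.

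First I would decompose $u(z)=f(z)+g(z)$ where $f(z)=\sum_{n\geq 0}a_nz^n$ extends holomorphically to $B_R$ and $g(z)=\sum_{n\geq 1}a_{-n}z^{-n}$ is holomorphic on $\C\setminus\bar{B}_r$ with $g(\infty)=0$. Cauchy's coefficient estimate on the inner circle yields $|a_0|\leq\delta$ and $|a_{-n}|\leq\delta r^n$ for $n\geq 1$, so a geometric series gives
\begin{align*}
|g(z)|\leq \frac{\delta r}{|z|-r}\quad\text{for }|z|>r,
\end{align*}
and in particular $|g(z)|\leq \delta$ as soon as $|z|\geq 2r$. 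The assumption $R>4r$ then yields $|f|\leq |u|+|g|\leq M+\delta/3\leq M+\delta$ on $\partial B_R$, hence on $\bar{B}_R$ by the maximum modulus principle. Crucially, $f(0)=a_0$ is already controlled by $\delta$, so $F:=f-f(0)$ vanishes at the origin and satisfies $|F|\leq 2(M+\delta)$ on $\partial B_R$; the Schwarz lemma applied to $w\mapsto F(Rw)/(2(M+\delta))$ produces
\begin{align*}
|f(z)|\leq \delta+\frac{2(M+\delta)}{R}|z|,\qquad z\in\bar{B}_R.
\end{align*}

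Combining the bounds on $f$ and $g$ yields $|u(z)|\leq 2\delta+2(M+\delta)|z|/R$ throughout $\{|z|\geq 2r\}$. For the remaining annular layer $r\leq|z|\leq 2r$, where the previous bound on $g$ degenerates, I would exploit the subharmonicity of $|u|$ on $\{r\leq|\zeta|\leq 2r\}\subset\Omega$: the maximum principle bounds $|u(z)|$ by the larger of $\np{u}{\infty}{\partial B_r}\leq\delta$ and $\np{u}{\infty}{\partial B_{2r}}\leq 2\delta+4(M+\delta)r/R$, the latter coming from the outer estimate at $|z|=2r$. Since $r\leq|z|$ in this regime, $r/R\leq|z|/R$, and one concludes $|u(z)|\leq 2\delta+4(M+\delta)|z|/R$ in both regimes, giving the stated inequality with constant $4<5$ to spare.

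No step is technically delicate; the only point requiring attention is the recentering $f=a_0+(f-a_0)$ with the Cauchy control $|a_0|\leq\delta$, which is what allows the Schwarz lemma to produce a linear estimate with additive term of size $\delta$ rather than $M+\delta$. Without it, the maximum-modulus bound on $f$ alone only gives $|f|\leq M+\delta$ uniformly and fails to detect the smallness inherited from the inner circle.
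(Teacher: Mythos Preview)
Your argument is correct and in fact yields the slightly sharper constant $4$ in place of $5$. The Laurent splitting $u=f+g$, the Cauchy bound $|a_0|\leq\delta$ allowing the recentring $F=f-a_0$, the Schwarz lemma for $F$, the geometric-series control of $g$ on $\{|z|\geq 2r\}$, and the maximum principle on the inner layer $\{r\leq|z|\leq 2r\}$ all go through as you describe. The only cosmetic point is that $f$ is a priori only holomorphic on the \emph{open} disk $B_R$, so the bound $|f|\leq M+\delta$ should first be obtained on circles $|z|=R'$ with $R'\nearrow R$ and then passed to the limit; this is routine.

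The paper itself does not prove this lemma here but recalls it from the companion article \cite{blow-up}. Your Laurent--Schwarz approach is exactly the standard route for Schwarz-type estimates on annuli and is presumably what the cited proof does as well; there is no meaningful methodological difference to report.
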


\begin{lemme}\label{schwarz2}
	Let $0<r_1,\cdots,r_m<R<\infty$ be fixed radii, $a_1,\cdots,a_m\in B(0,R)$ be such that $\bar{B}(a_j,r_j)\subset B(0,R)$, $\bar{B}(a_j,r_j)\cap \bar{B}(a_k,r_k)=\varnothing$ for all $1\leq j\neq k\leq m$ and
	\begin{align}\label{hyp}
	4r_j<R-|a_j|\quad\text{for all}\;\, 1\leq j\leq m.
	\end{align} 
	Furthermore, define
	\begin{align*}
	\Omega=B(0,R)\setminus\bigcup_{j=1}^m\bar{B}(a_j,r_j)\qquad \Omega'=\bigcap_{j=1}^mB(a_j,R-|a_j|)\setminus \bar{B}(a_j,r_j).
	\end{align*}
	Let ${u}:\Omega\rightarrow \C$ be a holomorphic function and for all $1\leq j\leq m$ let $\delta_j\geq 0$ such that
	\begin{align*}
	\np{u}{\infty}{\partial B_{r_j}(a_j)}\leq \delta_j.
	\end{align*}
	Then we have for all $z\in \Omega'$
	\begin{align}\label{schwarzmultiple}
	|u(z)|&\leq \sum_{j=1}^m\frac{5}{R-|a_j|}\left(\frac{1}{m}\np{u}{\infty}{\partial B_R(0)}+2\delta_j\right)|z-a_j|+4\sum_{j=1}^{m}\delta_j\nonumber\\
	&+\sum_{j=1}^{m}\frac{5}{R-|a_j|}\left(\frac{1}{m}\np{u}{\infty}{\partial B_R(0)}+2\delta_j\right)\max_{1\leq j\neq k\leq m}\dist_{\mathscr{H}}(a_k,\partial B_{r_j}(a_j)),
	\end{align}
	where $\mathrm{dist}_{\mathscr{H}}$ is the Hausdorff distance.
\end{lemme}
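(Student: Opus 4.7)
The plan is to reduce the multiply-connected estimate to the single-hole case (Lemma \ref{schwarz}) by decomposing $u$ into a Laurent-type sum over the holes and then applying a shifted version of Lemma \ref{schwarz} around each hole separately. Concretely, I would start by using Cauchy's integral formula to write
$$u(z) = A(z) + \sum_{j=1}^{m} B_j(z),\qquad z\in\Omega,$$
where $A(z)=\frac{1}{2\pi i}\oint_{\partial B(0,R)}\frac{u(\zeta)}{\zeta-z}\,d\zeta$ is holomorphic on the whole of $B(0,R)$, and each $B_j(z)=-\frac{1}{2\pi i}\oint_{\partial B(a_j,r_j)}\frac{u(\zeta)}{\zeta-z}\,d\zeta$ is holomorphic on $\mathbb{C}\setminus\bar{B}(a_j,r_j)$ and vanishes at infinity. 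The crucial observation is that for each $j$ the partial sum $v_j:=A+B_j=u-\sum_{k\neq j}B_k$ is holomorphic on $B(0,R)\setminus\bar{B}(a_j,r_j)$, and in particular on the genuine annulus $B(a_j,R-|a_j|)\setminus\bar{B}(a_j,r_j)$, on which hypothesis \eqref{hyp} is exactly the condition $4r_j<R-|a_j|$ required by Lemma \ref{schwarz}.

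Applying Lemma \ref{schwarz} (translated to be centered at $a_j$) to $v_j$ on this annulus yields
$$|v_j(z)|\leq\frac{5}{R-|a_j|}\bigl(\|v_j\|_{\infty,\partial B(a_j,R-|a_j|)}+\tilde{\delta}_j\bigr)|z-a_j|+2\tilde{\delta}_j$$
for any $\tilde{\delta}_j\geq\|v_j\|_{\infty,\partial B(a_j,r_j)}$. The outer-boundary norm is bounded by combining the maximum modulus principle for $A$ on $B(0,R)$ (which gives $|A|\leq\|u\|_{\infty,\partial B_R(0)}$) with the Schwarz-via-inversion estimate $|B_j(z)|\leq r_j M_j/|z-a_j|$, obtained by noting that $w\mapsto B_j(a_j+1/w)$ extends holomorphically to $B(0,1/r_j)$ and vanishes at $0$, with $M_j:=\|B_j\|_{\infty,\partial B(a_j,r_j)}$; the hypothesis \eqref{hyp} then yields a factor of $1/4$ on $\partial B(a_j,R-|a_j|)$. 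The inner-boundary norm $\tilde{\delta}_j$ combines $|u|\leq\delta_j$ on $\partial B(a_j,r_j)$ with the cross-terms $\|B_k\|_{\infty,\partial B(a_j,r_j)}$ for $k\neq j$; these cross-terms are estimated by applying the same inversion trick to $B_k$ around $a_k$ and then transporting from a reference point in $\partial B(a_j,r_j)$, which is where the Hausdorff distance $\mathrm{dist}_{\mathscr{H}}(a_k,\partial B_{r_j}(a_j))$ visible in \eqref{schwarzmultiple} emerges.

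To recover the pointwise bound on $u$ itself, I would average the $m$ identities $u=v_j+\sum_{k\neq j}B_k$ over $j$, giving $u=\frac{1}{m}\sum_j v_j+\frac{m-1}{m}\sum_k B_k$. The first sum produces the terms involving $\frac{1}{m}\|u\|_{\infty,\partial B_R(0)}$ inside the summation of \eqref{schwarzmultiple}, while the $B_k$ residuals together with the $\tilde{\delta}_j$'s contributed by each $v_j$-estimate are grouped into the constant-in-$z$ piece $4\sum_j\delta_j$ and the Hausdorff-distance correction.

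The main obstacle is closing the coupled linear system for the quantities $M_1,\ldots,M_m$: the relation $M_j\leq\delta_j+\|A\|_\infty+\sum_{k\neq j}\|B_k\|_{\infty,\partial B(a_j,r_j)}$ ties $M_j$ to all the other $M_k$'s through weights of the form $r_k/\mathrm{dist}(a_j,\bar{B}(a_k,r_k))$. The disjointness $\bar{B}(a_j,r_j)\cap\bar{B}(a_k,r_k)=\emptyset$ together with the geometric condition \eqref{hyp} forces these off-diagonal weights to be small enough for the system to be inverted, at which point the explicit universal constants ($5$, $4$, $\ldots$) appearing in \eqref{schwarzmultiple} can be read off. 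Carrying these constants through the Taylor-expansion step for the cross-terms and through the final averaging is the most delicate accounting in the proof.
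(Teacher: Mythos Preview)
The paper does not actually give a proof of this lemma: it is introduced with the sentence ``First, recall the following two Schwarz-type Lemmas from \cite{blow-up}'', so both Lemma~\ref{schwarz} and Lemma~\ref{schwarz2} are quoted from the companion paper \cite{blow-up} and no argument is reproduced here. There is therefore nothing in the present paper to compare your proposal against.

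That said, your outline is a natural strategy and is in the spirit of how such multi-hole Schwarz estimates are typically derived: the Cauchy-integral decomposition $u=A+\sum_j B_j$ into a part holomorphic on all of $B(0,R)$ and principal parts $B_j$ holomorphic off each $\bar B(a_j,r_j)$ is the standard Laurent separation on a multiply connected domain, and reducing to the single-annulus Lemma~\ref{schwarz} around each puncture is the expected move. The averaging that produces the $\tfrac{1}{m}\|u\|_{\infty,\partial B_R(0)}$ factor and the appearance of $\dist_{\mathscr H}(a_k,\partial B_{r_j}(a_j))$ from the cross-terms $B_k|_{\partial B(a_j,r_j)}$ are both consistent with the shape of \eqref{schwarzmultiple}. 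The one point where your sketch is genuinely incomplete is the ``closing the coupled linear system for the $M_j$'' step: the disjointness hypothesis $\bar B(a_j,r_j)\cap\bar B(a_k,r_k)=\varnothing$ alone does not force the off-diagonal weights $r_k/\dist(\partial B(a_j,r_j),a_k)$ to be small (two holes could almost touch), so you would need to check precisely how the constants in \eqref{schwarzmultiple} absorb this; since the final bound already contains the Hausdorff-distance term, it is plausible that the argument in \cite{blow-up} sidesteps an explicit inversion by feeding these cross-terms directly into the right-hand side rather than bootstrapping them.
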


\begin{theorem}\label{B2}
	Let $n\geq 3$ and $\mathscr{A}$ be an admissible
	family of $W^{2,4}$ immersions of the sphere $S^2$ into $\R^n$. Assume that
	\begin{align*}
	\beta_0=\inf_{A\in \mathscr{A}}\sup W(A)>0.
	\end{align*}
	Then there exists finitely many true branched compact Willmore spheres $\phi_1
	,\cdots,\phi_p:S^2\rightarrow\R^n$, and true branched compact Willmore spheres $\vec{\Psi}_1,\cdots,\vec{\Psi}_q:S^2\rightarrow\R^n$ such that
	\begin{align}\label{quantization2}
	\beta_0=&\sum_{i=1}^{p}W(\phi_i)+\sum_{j=1}^{q}\left(W(\vec{\Psi}_j)-4\pi\theta_j\right)\in 4\pi\N,
	\end{align}
	where $\theta_0(\vec{\Psi}_j,p_j)\geq 1$ is the multiplicity of $\vec{\Psi}_j$ at some point $p_j\in \vec{\Psi}_j(S^2)\subset \R^n$. Then at every branch point $p$ of $\phi_i, \vec{\Psi}_j$ of multiplicity $\theta_0=\theta_0(p)\geq 2$, the second residue $r(p)$ satisfies the inequality $r(p)\leq \theta_0-2$.
\end{theorem}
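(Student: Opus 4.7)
The plan is to combine the sharp $L^{2,1}$ no-neck energy of Theorem~\ref{improvedquanta2} with a Schwarz-type argument (Lemmas~\ref{schwarz} and~\ref{schwarz2}) applied to an almost-holomorphic quantity associated with the Willmore conservation law on each neck of the bubble tree. I would first reduce to the viscosity setting: every limiting branched compact Willmore sphere in (\ref{quantization2}) arises as a bubble-tree limit of smooth critical points $\phi_k$ of $W_{\sigma_k}$ satisfying the entropy condition (\ref{entropy}) (\cite{eversion}), so it suffices to fix a branch point $p$ of multiplicity $\theta_0\geq 2$ in some $\phi_i$ or $\vec{\Psi}_j$ and to study the neck regions $\Omega_k(\alpha)=B_{\alpha R_k}\setminus\bar{B}_{\alpha^{-1}r_k}(0)$ separating $p$ from its attached bubble.

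On each $\Omega_k(\alpha)$ I would form the complex-valued function $u_k(z)$ obtained by pairing the $(2,0)$-part of $\vec{\I}_{\phi_k}$ with the Noether field $\L_k$ of the conservation law (\ref{viscosityconservation1}). By the Codazzi identity used in the proof of Theorem~\ref{improvedquanta2}, $u_k$ is holomorphic modulo error terms built from $\sigma_k^2(1+|\H_k|^2)\H_k$, from the $l(\sigma_k)$-coupled Aubin-gauge and area contributions, and from the Codazzi remainder $\vec{\I}_k\res_{g_k}(\ast\, d\omega_k)$. The entropy condition (\ref{entropy}) together with Lemma~\ref{l2weak} bounds these errors in $L^{2,1}$ at rate $o(\tilde{l}(\sigma_k))$ on every neck; the strong conclusion of Theorem~\ref{improvedquanta2}, namely $\np{\D\n_k}{2,1}{\Omega_k(\alpha)}\to 0$, then upgrades by Lorentz duality the classical $L^2$ control on $\L_k$ to an $L^{2,1}$--$L^{2,\infty}$ control, and a standard residue expansion yields a uniform bound $\np{u_k}{\infty}{\partial B_{\alpha^{-1}r_k}}\leq \delta_k\cdot (\alpha^{-1}r_k)^{-(\theta_0-2)}$ with $\delta_k\to 0$.

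Applying Lemma~\ref{schwarz} to the holomorphic piece of $z^{\theta_0-1}u_k(z)$ on $\Omega_k(\alpha)$ and then letting $k\to\infty$ and $\alpha\to 0$, the smallness on the inner boundary propagates to the full annulus and forces the polar order of the limiting meromorphic quartic at $p$ to be at most $\theta_0-2$, that is $r(p)\leq\theta_0-2$; when several branch points collapse to the same ambient point, Lemma~\ref{schwarz2} replaces Lemma~\ref{schwarz} verbatim.

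The hardest step will be the verification that every perturbation term in (\ref{viscosityconservation1}) is controlled in $L^{2,1}$ rather than merely in $L^2$, and that the resulting error does not eat one extra power of $|z|$ in the boundary estimate for $u_k$. This is precisely what the $L^{2,1}$-strengthening of the $L^2$ no-neck energy of \cite{eversion} achieves, and it is exactly the gain needed to obtain $r(p)\leq\theta_0-2$ rather than the weaker classical bound $r(p)\leq\theta_0-1$ implied only by finiteness of $\int|\D\n|^2$.
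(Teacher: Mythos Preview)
Your overall plan---reduce to the viscosity min-max sequence, build an almost-holomorphic quantity on each neck out of the conservation law \eqref{viscosityconservation1}, control its $\bar\partial$ using Theorem~\ref{improvedquanta2}, and finish with the Schwarz lemmas---is the paper's strategy. But two concrete choices in your sketch would not go through.

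First, the almost-holomorphic object is not a pairing of the $(2,0)$-part of $\vec{\I}_{\phi_k}$ with $\vec{L}_k$, and the second residue is not read off a ``meromorphic quartic''. By definition $r(p)\leq\theta_0-2$ means $\vec{C}_0=0$ in the expansion $\H_{\phi_\infty}=\Re(\vec{C}_0/z^{\theta_0-1})+O(|z|^{2-\theta_0}\log^2|z|)$, so the correct quantity is
\[
\vec{\Psi}_k(z)=\varphi_k(z)\bigl\{(1+2\sigma_k^2(1+|\H_k|^2))\H_k+i\vec{L}_k\bigr\},\qquad \varphi_k(z)=\prod_j(z-x_k^j)^{\theta_0^j},
\]
whose limit at the origin is exactly $\vec{C}_0$. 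Second, the functional-analytic roles are reversed: one does \emph{not} bound the perturbation terms of \eqref{viscosityconservation1} in $L^{2,1}$. Instead $\bar\partial\vec\Psi_k$ is placed in $L^1$ by the duality $\np{\varphi_k\H_k}{2,\infty}{}\cdot\np{\D\n_k}{2,1}{}$, and Lemma~\ref{l2weak} gives the solution piece $\vec{u}_k$ only in $L^{2,\infty}$. Writing $\vec\Psi_k=\vec u_k+\vec v_k+\vec w_k$ with $\vec v_k$ holomorphic (controlled on the inner circle by Step~3 of \cite{blow-up} and then by Lemma~\ref{schwarz}) and $\np{\vec w_k}{2,\infty}{}=o(\tilde l(\sigma_k))$, one gets the absorption inequality $\np{\vec u_k}{2,\infty}{}\lesssim\np{\D\n_k}{2,1}{\Omega_k(\alpha)}\np{\vec\Psi_k}{2,\infty}{}$, and the improved no-neck energy makes the constant small. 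The conclusion $\vec C_0=0$ then requires a further balancing step you omit: one compares $\np{\vec\Psi_k}{2,\infty}{B_{\alpha^p}\setminus\bar B_{\alpha^{-1}\rho_k}}$ from below by $\sqrt\pi\,\alpha^p|\vec C_0|-C\alpha^{2p}$ and from above by $C\alpha^{2p-1}+\np{\vec u_k}{2,\infty}{}$, and chooses $p(\alpha)>1$ so that $\alpha^{p-1}=\sqrt{f(\alpha)}$ with $f(\alpha)=\limsup_k\np{\D\n_k}{2,1}{\Omega_k(\alpha)}$; without this the bound degenerates to $|\vec C_0|\leq C$.
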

\begin{proof}
	Using the main result of \cite{eversion}, we see that there exists a sequence $\ens{\sigma_k}_{k\in \N}\subset (0,\infty)$ such that $\sigma_k\conv{k\rightarrow \infty}0$ and a sequence of smooth immersions $\{\phi_k\}_{k\in \N}\subset \mathrm{Imm}(S^2,\R^3)$ such that $\phi_k$ be a critical point of $W_{\sigma_k}$ such that 
	\begin{align*}
		W_{\sigma_k}(\phi_k)=\beta(\sigma_k),
	\end{align*}
	where 
	\begin{align*}
		\beta(\sigma_k)=\inf_{A\in \mathscr{A}}\sup W_{\sigma_k}(A)\conv{k\rightarrow \infty}\beta_0.
	\end{align*}
    Now, we can consider that $\phi_k:B(0,1)\rightarrow \R^3$ to be a critical point of $W_{\sigma_k}$ such that $\phi_k\conv{k\rightarrow \infty}\phi_{\infty}$ in $C^{l}_{\mathrm{loc}}(B(0,1)\setminus\ens{0})$ for all $l\in \N$. By \cite{beriviere}, there exists $\vec{C}_0\in \C^n$ such that 
    \begin{align*}
    	\H_{\phi_{\infty}}=\Re\left(\frac{\vec{C}_0}{z^{\theta_0-1}}\right)+O(|z|^{2-\theta_0}\log^2|z|).
    \end{align*}
    Then $r(0)\leq \theta_0-2$ if and only if $\vec{C}_0=0$, and this is what we will show in the rest of the proof.     
    Now, define $\Omega_k(\alpha)$ to be the neck-region
	\begin{align*}
		\Omega_k(\alpha)=B_{\alpha}(0)\setminus \bigcup_{j=1}^m\bar{B}_{\alpha^{-1}\rho_k^j}(x_k^j).
	\end{align*}
	Then notice that Theorem D of  \cite{blow-up} still applies. In particular, there exists $\theta_0^1,\cdots,\theta_0^m\in \Z\setminus\ens{0}$ and a universal constant $C=C(n,\Lambda)$ independent of $k$ and $0<\alpha<1$ such that for all $k\in \N$ large enough and for all $z\in \Omega_k(\alpha)$
	\begin{align}\label{holomorphicomparison}
	\frac{1}{C}\leq \frac{e^{\lambda_k(z)}}{\displaystyle\prod_{j=1}^{m}|z-x_k^j|^{\theta_0^j}}\leq C.
	\end{align}
	Now, let $\varphi_k(z):\Omega_k(\alpha)\rightarrow \C$ be the holomorphic function such that 
	\begin{align*}
		\varphi_k(z)=\prod_{j=1}^{m}\left(z-x_k^j\right)^{\theta_0^j}\qquad \text{for all}\;\, z\in \Omega_k(\alpha).
	\end{align*}
	To simplify notations, we state  the proof in codimension $1$ ($n=3$). 
	Furthermore, to simplify the proof, we will use Lemma \ref{schwarz} instead of \ref{schwarz2} and assume that we have only one bubble. We assume then in the following that 
	\begin{align*}
		\Omega_k(\alpha)=B_{\alpha}(0)\setminus \bar{B}_{\alpha^{-1}\rho_k}(0).
	\end{align*}
	
	First recall that the invariance by translation (\cite{eversion}, Lemma III.$8$) shows that there exists $\vec{L}_k:B(0,1)\rightarrow \R^3$ such that
	\begin{align}\label{viscosityconservation1}
		d\vec{L}_k&=\ast\, d\left(\H_k+2\sigma_k^2(1+|\H_k|^2)\H_k\right)-2\left(1+2\sigma_k^2(1+|\H_k|^2)\right)H_k\,\ast d\n_k\nonumber\\
		&
		+\left(-\left(|\H_k|^2+\sigma_k^2(1+|\H_k|^2)^2\right)+\frac{1}{\log\left(\frac{1}{\sigma_k}\right)}\left(\frac{1}{2}|d\omega|_{g_k}^2-2\pi  \omega_ke^{-2\omega_k}+\frac{2\pi}{\mathrm{Area}(\phi_k(S^2))}\right)\right)\ast d\phi_k\nonumber\\
		&-\frac{1}{\log\left(\frac{1}{\sigma_k}\right)}\s{d\phi_k}{d\omega_k}_{g_k}\ast d\omega_k+\frac{1}{\log\left(\frac{1}{\sigma_k}\right)}\vec{\I}_k\res_{g_k}\left(\ast\, d\omega_k\right).
	\end{align}
	Now, recall that $d=\partial+\bar{\partial}$, and $\ast\, \partial=-i\,\partial$, we deduce that
	\begin{align*}
		i\,\partial \vec{L}_k&=\partial\left((1+2\sigma^2_k\left(1+|\H_k|^2\right)\H_k\right)-2\left(1+2\sigma_k^2\left(1+|\H_k|^2\right)\right)H_k\,\partial\n_k\\
		&+\left(-\left(|\H_k|^2+\sigma_k^2(1+|\H_k|^2)^2\right)+\frac{1}{\log\left(\frac{1}{\sigma_k}\right)}\left(\frac{1}{2}|d\omega_k|_{g_k}^2-2\pi  \omega_ke^{-2\omega_k}+\frac{2\pi}{\mathrm{Area}(\phi_k(S^2))}\right)\right)\partial\phi_k\\
		&-\frac{1}{\log\left(\frac{1}{\sigma_k}\right)}\s{d\phi_k}{d\omega_k}_{g_k}\partial \omega_k+\frac{1}{\log\left(\frac{1}{\sigma_k}\right)}\vec{\I}_k\res_{g_k}\partial\omega_k.
	\end{align*}
	Finally, we can recast this equation as 
	\begin{align}\label{delbarre0}
		&\bar{\partial}\left(\left(1+2\sigma_k^2\left(1+|\H_k|^2\right)\right)\H_k+i\vec{L}_k\right)=-\left(1+2\sigma_k^2\left(1+|\H_k|^2\right)\right)H_k\,\bar{\partial}\n_k\nonumber\\
		&+\left(-\left(|\H_k|^2+\sigma_k^2\left(1+|\H_k|^2\right)^2\right)+\frac{1}{\log\left(\frac{1}{\sigma_k}\right)}\left(\frac{1}{2}|d\omega_k|_{g_k}^2-2\pi\omega_k e^{-2\omega_k}+\frac{2\pi}{\mathrm{Area}(\phi_k(S^2))}\right)\right)\bar{\partial}\phi_k\nonumber\\
		&-\frac{1}{\log\left(\frac{1}{\sigma_k}\right)}\s{d\phi_k}{d\omega_k}_{g_k}\bar{\partial}\omega_k+\frac{1}{\log\left(\frac{1}{\sigma_k}\right)}\vec{\I}_k\res_{g_k}\bar{\partial}\omega_k.
	\end{align}
	Now, let $\vec{\Psi}_k:\Omega_k(\alpha)\rightarrow \C$ be defined as 
	\begin{align}\label{psi} \vec{\Psi}_k(z)=\varphi_k(z)\left\{\left(1+2\sigma_k^2\left(1+|\H_k|^2\right)\right)\H_k+i\vec{L}_k\right\}.
	\end{align}
	Then by \eqref{delbarre0}, as $\varphi_k$ is holomorphic, $\vec{\Psi}_k$ solves
	\begin{align}\label{delbarre}
		&\bar{\partial}\vec{\Psi}_k=-\varphi_k\left(1+2\sigma_k^2\left(1+|\H_k|^2\right)\right)H_k\,\bar{\partial}\n_k\nonumber\\
		&+\varphi_k\left(-\left(|\H_k|^2+\sigma_k^2\left(1+|\H_k|^2\right)^2\right)+\frac{1}{\log\left(\frac{1}{\sigma_k}\right)}\left(\frac{1}{2}|d\omega_k|_{g_k}^2-2\pi\omega_k e^{-2\omega_k}+\frac{2\pi}{\mathrm{Area}(\phi_k(S^2))}\right)\right)\bar{\partial}\phi_k\nonumber\\
		&-\frac{1}{\log\left(\frac{1}{\sigma_k}\right)}\varphi_k\s{d\phi_k}{d\omega_k}_{g_k}\bar{\partial}\omega_k+\frac{1}{\log\left(\frac{1}{\sigma_k}\right)}\varphi_k\vec{\I}_k\res_{g_k}\bar{\partial}\omega_k.
	\end{align}
    Now, write $\vec{\Psi}_k=\vec{u}_k+\vec{v}_k+\vec{w}_k$, where
	\begin{align*}
		\left\{\begin{alignedat}{2}
		\bar{\partial}\vec{u}_k&=-\varphi_k H_k\bar{\partial}\n_k-\varphi_k|\H_k|^2\bar{\partial}\phi_k\qquad&&\text{in}\;\,\Omega_k(\alpha)\\
		\vec{u}_k&=0\qquad&&\text{on}\;\, \partial \Omega_k(\alpha)
		\end{alignedat}\right.
	\end{align*}
	and
	\begin{align}\label{eqw}
		\left\{\begin{alignedat}{2}
		\bar{\partial}\vec{w}_k&=-2\sigma_k^2\varphi_k\left(1+|\H_k|^2\right)H_k\bar{\partial}\n_k-\sigma_k^2\left(1+|\H_k|^2\right)^2\varphi_k\,\bar{\partial}\phi_k&&\text{in}\;\, \Omega_k(\alpha)\\
		&+\frac{1}{\log\left(\frac{1}{\sigma_k}\right)}\left(\frac{1}{2}|d\omega_k|_{g_k}^2-2\pi\omega_k e^{-2\omega_k}+\frac{2\pi}{\mathrm{Area}(\phi_k(S^2))}\right)\varphi_k\bar{\partial}\phi_k\\
		&-\frac{1}{\log\left(\frac{1}{\sigma_k}\right)}\s{d\phi_k}{d\omega_k}_{g_k}\varphi_k\bar{\partial}\omega_k+\frac{1}{\log\left(\frac{1}{\sigma_k}\right)}\varphi_k\left(\vec{\I}_k\res_{g_k}\bar{\partial}\omega_k\right)\qquad\\
		\vec{w}_k&=0\qquad&&\text{in}\;\,\partial \Omega_k(\alpha).
		\end{alignedat}\right.
	\end{align}
	Finally, $\vec{v}_k:\Omega_k(\alpha)\rightarrow \C$ is the holomorphic function
	\begin{align*}
		\left\{\begin{alignedat}{2}
		\bar{\partial}\vec{v}_k&=0\qquad&&\text{in}\;\, \Omega_k(\alpha)\\
		\vec{v}_k&=\vec{\Psi}_k\qquad&&\text{on}\;\, \partial\Omega_k(\alpha).
		\end{alignedat}\right.
	\end{align*}
	As $\bar{\partial}\vec{w}_k\in L^1(\Omega_k(\alpha))$, the Sobolev embedding only shows that  $\vec{w}_k\in L^{2,\infty}(\Omega_k(\alpha))$, so we have to obtain an estimate using this norm. 
	The duality $L^{2,1}/L^{2,\infty}$ shows that 
	\begin{align}\label{newnorm1}
		\np{\bar{\partial}\vec{u}_k}{1}{\Omega_k(\alpha)}&\leq \np{\varphi_k H_k}{2,\infty}{\Omega_k(\alpha)}\np{\bar{\partial}\n_k}{2,1}{\Omega_k(\alpha)}+\np{\varphi_kH_k}{2,\infty}{\Omega_k(\alpha)}\np{H_k\bar{\partial}\phi_k}{2,1}{\Omega_k(\alpha)}\nonumber\\
		&\leq 2\np{\D\n_k}{2,1}{\Omega_k(\alpha)}\np{\vec{\Psi}_k}{2,\infty}{\Omega_k(\alpha)}
	\end{align}
	This implies by Lemma \ref{l2weak} that
	\begin{align}\label{newnorm2}
		\np{\vec{u}_k}{2,\infty}{\Omega_k(\alpha)}\leq 6\sqrt{\frac{2}{\pi}}\np{\D\n_k}{2,1}{\Omega_k(\alpha)}\np{\vec{\Psi}_k}{2,\infty}{\Omega_k(\alpha)}
	\end{align}
	Finally, thanks to the maximum principle
	\begin{align}\label{maxprinciple}
		\np{\vec{v}_k}{\infty}{\Omega_k(\alpha)}\leq \np{\vec{\Psi}_k}{\infty}{\partial \Omega_k(\alpha)}.
	\end{align}
	Now, we will recall the estimates obtained in \cite{eversion} imply that 
	\begin{align}\label{ineq1}
		&\frac{1}{\log\left(\frac{1}{\sigma_k}\right)}\np{\omega_k}{\infty}{B(0,1)}=o\left(\tilde{l}(\sigma_k)\right)\nonumber\\
		&\frac{1}{\log\left(\frac{1}{\sigma_k}\right)}\int_{S^2}|d\omega_k|_{g_k}^2d\mathrm{vol}_{g_k}=o\left(\tilde{l}(\sigma_k)\right)\\
		&\frac{1}{\log\left(\frac{1}{\sigma_k}\right)}\left(\frac{1}{2}\int_{S^2}|d\omega_k|_{g_k}^2d\mathrm{vol}_{g_k}+4\pi\int_{S^2}\omega_ke^{-2\omega_k}d\mathrm{vol}_{g_k}-2\pi\log\int_{S^2}d\mathrm{vol}_{g_k}\right)=o\left(\tilde{l}(\sigma_k)\right)\nonumber.
	\end{align}
	Furthermore, as $\mathrm{Area}(\phi_k(S^2))=1$ and by \eqref{holomorphicomparison}, we deduce that 
	\begin{align}\label{ineq2}
		\frac{1}{\log\left(\frac{1}{\sigma_k}\right)}\np{\left(\frac{1}{2}|d\alpha|_{g_k}^2-2\pi\omega_k e^{-2\omega_k}+\frac{2\pi}{\mathrm{Area}(\phi_k(S^2))}\right)\varphi_k\bar{\partial}\phi_k}{1}{\Omega_k(\alpha)}=o\left(\tilde{l}(\sigma_k)\right),
	\end{align}
	while
	\begin{align}\label{ineq3}
		\frac{1}{\log\left(\frac{1}{\sigma_k}\right)}\np{\s{d\phi_k}{d\omega_k}\varphi_k\bar{\partial}\omega_k}{1}{\Omega_k(\alpha)}\leq \frac{C}{\log\left(\frac{1}{\sigma_k}\right)}\int_{\Omega_k(\alpha)}|d\omega_k|_{g_k}^2d\mathrm{vol}_{g_k}=o\left(\tilde{l}(\sigma_k)\right).
	\end{align}
	Finally, we have by the Cauchy-Schwarz inequality and \eqref{ineq1}
	\begin{align}\label{ineq4}
		\frac{1}{\log\left(\frac{1}{\sigma_k}\right)}\np{\varphi_k\left(\vec{\I}_k\res_{g_{k}}\bar{\partial}\omega_k\right)}{1}{\Omega_k(\alpha)}\leq \frac{C}{\log\left(\frac{1}{\sigma_k}\right)}\np{\D\n_k}{2}{\Omega_k(\alpha)}\np{\D\omega_k}{2}{\Omega_k(\alpha)}=o\left(\sqrt{l(\sigma_k)}\right).
	\end{align}
	Therefore, by \eqref{eqw},  \eqref{ineq2}, \eqref{ineq3} and \eqref{ineq4}
	\begin{align*}
		\np{\bar{\partial}\w_k}{1}{\Omega_k(\alpha)}=o\left(\tilde{l}(\sigma_k)\right)\conv{k\rightarrow \infty}0.
	\end{align*}
	Now, thanks to Lemma \ref{l2weak}, we deduce that 
	\begin{align}\label{2infty}
		\np{\w_k}{2,\infty}{\Omega_k(\alpha)}=o\left(\tilde{l}(\sigma_k)\right)\conv{k\rightarrow \infty}0.
	\end{align}
	Now, notice that 
	\begin{align*}
		\sigma_k e^{-\lambda_k}=\sigma_k \exp\left(o\left(\log\left(\frac{1}{\sigma_k}\right)\right)\right)=o_k(1)
	\end{align*}
	so that 
	\begin{align*}
		\sigma_k|\H_k|=\sigma_k e^{-\lambda_k}\,e^{\lambda_k}|\H_k|=o_k(1) e^{\lambda_k}|\H_k|.
	\end{align*}
	Therefore, by the conformal invariant of $e^{\lambda_k}|\H_k|$, the argument of 
    \textbf{Step 3} of the proof of Theorem $4.1$ (of \cite{blow-up}) still applies and shows that 
    \begin{align*}
    	|\vec{\Psi}_k|=O(\alpha^2)+o_k(1)\qquad \text{on}\;\, \partial B_{\alpha^{-1}\rho_k}(0).
    \end{align*}
    Therefore, 	Lemma \ref{schwarz} shows that there exists a universal constant $C_1=C_1(n,\Lambda)$ (independent of $k\in \N$ and $0<\alpha<1$) such that for all $z\in \Omega_k(\alpha)=B_{\alpha}\setminus \bar{B}_{\alpha^{-1}\rho_k}(0)$
    \begin{align}\label{schwarzineq}
        |\vec{v}_k(z)|\leq \frac{C_1}{\alpha}|z|+C_1(\alpha^2+o_k(1))
    \end{align}
    Now observe that 
    \begin{align*}
    	\int_{B_{\alpha}\setminus \bar{B}_{\alpha^{-1}\rho_k}}|z|^2|dz|^2=2\pi\int_{\alpha^{-1}\rho_k}^{\alpha}r^3dr\leq \frac{\pi}{2}\alpha^4.
    \end{align*}
    In particular, we deduce that 
    \begin{align}\label{newnorm3}
    	\np{\vec{v}_k}{2,\infty}{\Omega_k(\alpha)}\leq 2\np{\vec{v}_k}{2}{\Omega_k(\alpha)}\leq \sqrt{2\pi}C_1\alpha+\sqrt{\pi}C_1\alpha^2(\alpha^2+o_k(1))
    \end{align}
    Now, as $\vec{\Psi}_k=\vec{u}_k+\vec{v}_k+\vec{w}_k$, we have for fixed $0<\alpha<1$ and all $k\in\N$ large enough
    \begin{align*}
    	\np{\vec{\Psi}_k}{2,\infty}{\Omega_k(\alpha)}\leq \frac{6}{\sqrt{2\pi}}\np{\D\n_k}{2,1}{\Omega_k(\alpha)}\np{\vec{\Psi}_k}{2,\infty}{\Omega_k(\alpha)}+\sqrt{2\pi}C_1\alpha+\sqrt{\pi}C_1\alpha^2\left(\alpha^2+o_k(1)\right)+\frac{1}{\log\log\left(\frac{1}{\sigma_k}\right)}.
    \end{align*}
    Thanks to the improved no-neck energy of Theorem \ref{improvedquanta2}
    \begin{align}\label{newno-neck}
    	\lim\limits_{\alpha\rightarrow 0}\limsup_{k\rightarrow \infty}\np{\D\n_k}{2,1}{\Omega_k(\alpha)}=0,
    \end{align}
    we deduce that (for some $C_2=C_2(n,\Lambda)$)
    \begin{align*}
    	\np{\vec{\Psi}_k}{2,\infty}{\Omega_k(\alpha)}\leq C_2\alpha+C_2\alpha^2\left(\alpha^2+o_k(1)\right)+\frac{2}{\log\log\left(\frac{1}{\sigma_k}\right)}.
    \end{align*}
    Therefore, coming back to the estimate \eqref{newnorm2}, we deduce that for some universal constant $C_2=C_2(n,\Lambda)$
    \begin{align}\label{firstu}
    	\np{\vec{u}_k}{2,\infty}{\Omega_k(\alpha)}\leq \left(C_2\alpha+C_2\alpha^2\left(\alpha^2+o_k(1)\right)+\frac{2}{\log\log\left(\frac{1}{\sigma_k}\right)}\right)\np{\D\n_k}{2,1}{\Omega_k(\alpha)}.
    \end{align}
    Now, let $p=p(\alpha)>1$ be a fixed positive number independent of $k\in \N$, to be determined later. The estimate
    \begin{align}\label{integral}
    	\np{|z|}{2}{B_{\alpha^p}\setminus \bar{B}_{\alpha^{-1}\rho_k}(0)}^2=\int_{B_{\alpha^{p}}\setminus \bar{B}_{\alpha^{-1}\rho_k}(0)}|z|^{2}|dz|^2=2\pi\int_{\alpha^{-1}\rho_k}^{\alpha^p}r^3dr\leq \frac{\pi}{2}\alpha^{4p}
    \end{align}
    implies by \eqref{schwarzineq} that 
    \begin{align}\label{end1}
    	\np{\vec{v}_k}{2,\infty}{B_{\alpha^p}\setminus \bar{B}_{\alpha^{-1}\rho_k}(0)}&\leq \frac{2C_1}{\alpha}\times \sqrt{\frac{\pi}{2}}\alpha^{2p}+2\sqrt{\pi}C_1\alpha^p(\alpha^2+o_k(1))\nonumber\\
    	&\leq 2\sqrt{\pi}C_1\,\alpha^{2p-1}+2\sqrt{\pi}C_1\alpha^p\left(\alpha^2+o_k(1)\right).
    \end{align}
    Furthermore, the uniform $\epsilon$-regularity implies that for all $z\in B(0,1)\setminus \ens{0}$, we have
    \begin{align*}
    \lim\limits_{k\rightarrow \infty}\vec{\Psi}_k(z)=\vec{C}_0+O(|z|).
    \end{align*}
    Therefore, for all $\beta<\alpha^p$, we deduce thanks to the triangle inequality and \eqref{integral} that for all $k\in \N$ large enough
    \begin{align}\label{endter}
    	\np{\vec{\Psi}_k}{2,\infty}{B_{\alpha^p}\setminus \bar{B}_{\alpha^{-1}\rho_k}(0)}\geq \np{\vec{\Psi}_k}{2}{B_{\alpha^p}\setminus \bar{B}_{\beta}(0)}&\conv{k\rightarrow \infty}\np{\vec{C}_0+O(|z|)}{2,\infty}{B_{\alpha^p}\setminus \bar{B}_{\beta}(0)}\nonumber\\
    	&\geq \sqrt{\pi}\alpha^p\sqrt{1-\left(\frac{\beta}{\alpha^p}\right)^2}|\vec{C}_0|-C_3\alpha^{2p}
    \end{align}
    for some universal constant $C_3=C_3(n,\Lambda)$ independent of $0<\alpha<1$ and $p\geq 1$. Taking $\beta\rightarrow 0$ in \eqref{endter} yields
    \begin{align}\label{end2}
    \liminf\limits_{k\rightarrow \infty}\np{\vec{\Psi}_k}{2,\infty}{B_{\alpha^2}\setminus\bar{B}_{\alpha^{-1}\rho_k}(0)}\geq \sqrt{\pi}\alpha^{p}|\vec{C}_0|-C'\alpha^{2p}.
    \end{align}
    Furthermore, as $\vec{\Psi}_k=\vec{u}_k+\vec{v}_k+\vec{w}_k$, and as $\lim\limits_{k\rightarrow \infty  }\np{\vec{w}_k}{2,\infty}{\Omega_k(\alpha)}=0$ by \eqref{2infty}
    \begin{align}\label{end3}
    \limsup_{k\rightarrow\infty}\np{\vec{\Psi}_k}{2,\infty}{B_{\alpha^p}\setminus\bar{B}_{\alpha^{-1}\rho_k}(0)}\leq \limsup_{k\rightarrow\infty}\np{\vec{u}_k}{2,\infty}{B_{\alpha^p}\setminus\bar{B}_{\alpha^{-1}\rho_k}(0)}+2\sqrt{\pi}C_1\alpha^{2p-1}+2\sqrt{\pi}C_1\alpha^{p+2}.
    \end{align}
    Therefore, we deduce by \eqref{end2} and \eqref{end3}
    \begin{align}\label{end4}
    	\sqrt{\pi}\alpha^{p}|\vec{C}_0|-C_3\alpha^{2p}\leq \limsup_{k\rightarrow\infty}\np{\vec{u}_k}{2,\infty}{B_{\alpha^p}\setminus\bar{B}_{\alpha^{-1}\rho_k}(0)}+2\sqrt{\pi}C_1\alpha^{2p-1}+2\sqrt{\pi}C_1\alpha^{p+2}.
    \end{align}
    By \eqref{firstu}, we deduce that 
    \begin{align}\label{endbis}
    \sqrt{\pi}\alpha^{p}|\vec{C}_0|-C_3\alpha^{2p}\leq 2\,C_2\alpha\limsup_{k\rightarrow\infty}\np{\D\n_k}{2,1}{\Omega_k(\alpha)}+2\sqrt{\pi}C_1\alpha^{2p-1}+2\sqrt{\pi}C_1\alpha^{p+2}.
    \end{align}
    Therefore, we find by dividing both inequalities by $\sqrt{\pi}\alpha^p$ (using $p\geq 1$ for the second inequality) for some universal constant  $C_4=C_4(n,\Lambda)$
    \begin{align}\label{end11}
    	|\vec{C}_0|&\leq C_4\left(\frac{1}{\alpha^{p-1}}\limsup_{k\rightarrow\infty}\np{\D\n_k}{2,1}{\Omega_k(\alpha)}+\alpha^{p-1}+\alpha^p+\alpha^2\right)\nonumber\\
    	&\leq C_4\left(\frac{1}{\alpha^{p-1}}\limsup_{k\rightarrow\infty}\np{\D\n_k}{2,1}{\Omega_k(\alpha)}+\alpha^{p-1}+\alpha+\alpha^2\right).
    \end{align}
    We will now have to distinguish two cases
    
    \textbf{Case 1.}
    Now, if for all $0<\alpha<1$ small enough
    \begin{align*}
    	\limsup_{k\rightarrow\infty}\np{\D\n_k}{2,1}{\Omega_k(\alpha)}=0,
    \end{align*}
    then \eqref{end11} implies by taking $p=2$ that
    \begin{align*}
    	|\vec{C}_0|\leq C_4(2\alpha+\alpha^2)\conv{\alpha\rightarrow 0}0,
    \end{align*}
    which concludes the proof of the Theorem.
    
    \textbf{Case 2}. 
    Otherwise, assume that 
    \begin{align*}
    	f(\alpha)=\limsup_{k\rightarrow \infty}\np{\D\n_k}{2,1}{\Omega_k(\alpha)}>0\qquad\text{for all}\;\, 0<\alpha<1,
    \end{align*}
    and choose 
    \begin{align*}
    	p(\alpha)=1+\frac{1}{2}\frac{\log\left(\frac{1}{f(\alpha)}\right)}{\log\left(\frac{1}{\alpha}\right)}>1,
    \end{align*}
    so that 
    \begin{align*}
    	\alpha^{p-1}=\sqrt{f(\alpha)}.
    \end{align*}
    Then \eqref{end11} implies that 
    \begin{align*}
    	|\vec{C}_0|\leq C_4\left(\frac{1}{\sqrt{f(\alpha)}}\times f(\alpha)+\sqrt{f(\alpha)}+\alpha+\alpha^2\right)= C_4\left(2\sqrt{f(\alpha)}+\alpha+\alpha^2\right)\conv{\alpha\rightarrow 0}0
    \end{align*}
    and this concludes the proof of the Theorem by the improved no-neck energy \eqref{newno-neck}. 
    \end{proof}

\nocite{}
\bibliographystyle{plain}
\bibliography{biblio}

\begin{thebibliography}{10}

\bibitem{beriviere}
Yann Bernard and Tristan Rivi\`{e}re.
\newblock Singularity removability at branch points for {W}illmore surfaces.
\newblock {\em Pacific {J}. {M}ath., Vol. 265, No. 2}, 2013.

\bibitem{quanta}
Yann Bernard and Tristan Rivi\`{e}re.
\newblock Energy quantization for {W}illmore surfaces and applications.
\newblock {\em Ann. of Math. 180, 87-136}, 2014.

\bibitem{bryant}
{Robert L.} Bryant.
\newblock A duality theorem for {W}illmore surfaces.
\newblock {\em J. {D}ifferential {G}eom., 20, 23-53}, 1984.

\bibitem{bryant3}
{Robert L}. Bryant.
\newblock {\em Surfaces in conformal geometry, \emph{in The mathematical
  heritage of Hermann Weyl}}.
\newblock Proceedings of {S}ymposia in {P}ure {M}athematics, 1987.

\bibitem{ge}
Yuxin Ge.
\newblock A remark on generalized harmonic maps into spheres.
\newblock {\em Nonlinear Anal. 36, no. 4, Ser. A: Theory Methods, 495–506.},
  1999.

\bibitem{hormandercomplex}
Lars H\"{o}rmander.
\newblock {\em An {I}ntroduction to {C}omplex {A}nalysis in {S}everal
  {V}ariables}.
\newblock Elsevier Science B.V., 1990.

\bibitem{hughes}
{John F.} Hughes.
\newblock Another proof that every sphere eversion of the sphere has a
  quadruple point.
\newblock {\em J. Amer. Math. Soc., Vol. 107, No. 2, 501-505}, 1985.

\bibitem{helein}
Frederick Hélein.
\newblock {\em Applications harmoniques, lois de conservation, et repères
  mobiles}.
\newblock Diderot éditeur, Sciences et Arts, 1996.

\bibitem{jorge}
{Luquesio P.} Jorge and {William H.}~Meeks III.
\newblock The topology of complete minimal surfaces of finite total {G}aussian
  curvature.
\newblock {\em Topology, Vol. 22n Ni. 2, pp. 203-221}, 1983.

\bibitem{kusner}
Robert Kusner.
\newblock On the minimax sphere eversion.
\newblock {\em Unpublished note}, 1995.

\bibitem{kuwert3}
Ernst Kuwert and Reiner Schätzle.
\newblock The {W}illmore flow with small initial energy.
\newblock {\em J. Differential Geom. 57, no. 3, 409–441}, 2001.

\bibitem{quantamoduli}
Paul Laurain and Tristan Rivière.
\newblock Energy quantization of {W}illmore surfaces at the boundary of the
  moduli space.
\newblock {\em Duke Math. J. 167, no. 11, 2073–2124}, 2018.

\bibitem{lieyau}
Peter Li and Shing-Tung Yau.
\newblock A {N}ew {C}onformal {I}nvariant and {I}ts {A}pplications to the
  {W}illmore {C}onjecture and the {F}irst {E}igenvalue of {C}ompact {S}urfaces.
\newblock {\em Invent. Math. 69, 269-291}, 1982.

\bibitem{lopez}
Francisco~J. L{\'o}pez.
\newblock The classification of complete minimal surfaces with total curvature
  greater than $-12\pi$.
\newblock {\em Trans. Amer. Math. Soc., Vol. 334, No. 1, November}, 1992.

\bibitem{max}
Nelson Max and Tom Banchoff.
\newblock Every sphere eversion has a quadruple point.
\newblock {\em Contributions to analysis and geometry (Baltimore, Md., 1980),
  pp. 191–209, Johns Hopkins Univ. Press, Baltimore}, 1981.

\bibitem{classification}
Alexis Michelat and Tristan Rivi{\`e}re.
\newblock The {C}lassification of {B}ranched {W}illmore {S}pheres in the
  $3$-{S}phere and the $4$-{S}phere.
\newblock {\em arXiv:1706.01405}, 2017.

\bibitem{sagepaper}
Alexis Michelat and Tristan Rivière.
\newblock Computer-{A}ssisted {P}roof of the {M}ain {T}heorem of\\ '{T}he
  {C}lassification of {B}ranched {W}illmore {S}pheres in the $3$-{S}phere and
  the $4$-{S}phere'.
\newblock {\em arXiv:1711.10441}, 2017.

\bibitem{blow-up}
Alexis Michelat and Tristan Rivière.
\newblock Higher {R}egularity of {W}eak {L}imits of {W}illmore {I}mmersions
  {I}.
\newblock {\em arXiv:1904.04816}, 2019.

\bibitem{riviere1}
Tristan Rivi\`{e}re.
\newblock Analysis aspects of {W}illmore surfaces.
\newblock {\em Invent. Math., 174, 1-45}, 2008.

\bibitem{eversion}
Tristan Rivière.
\newblock Willmore {M}inmax {S}urfaces and the {C}ost of the {S}phere
  {E}version.
\newblock {\em arXiv:1512.08918, to appear in J. Eur. Math. Soc.}, 2015.

\end{thebibliography}

\end{document}